\documentclass[a4paper,12pt]{article}

\usepackage{graphicx}
\usepackage{caption,color}   

\usepackage{comment}
\usepackage{amssymb}
\usepackage{amsthm}
\usepackage{hyperref}
\usepackage{amsmath}
\usepackage{epic}
\usepackage{CJK}
\usepackage{setspace}
\usepackage{float}
\usepackage{multirow}
\usepackage{tikz}
\usepackage{booktabs}
\usepackage{multirow}
\usepackage{nicefrac}
\usepackage{longtable}
\newtheorem{thm}{Theorem}[section]
\newtheorem{cor}[thm]{Corollary}
\newtheorem{lem}[thm]{Lemma}
\newtheorem{pro}[thm]{Proposition}

\newtheorem{example}[thm]{Example}
\newtheorem{rem}[thm]{Remark}
\newtheorem{conjecture}[thm]{Conjecture}

\usepackage{xcolor}

\title{Spectral bounds for distance coloring \\and packing  parameters of graphs \\via semidefinite programming}

\author{Aida Abiad \thanks{Department of Mathematics and Computer Science, Eindhoven University of Technology, Eindhoven, The Netherlands (\texttt{a.abiad.monge@tue.nl})\\ Department of Mathematics and Data Science of Vrije Universiteit Brussel, Brussels, Belgium}
\quad
Yue Yang 
\thanks{College of Mathematical Sciences, Harbin Engineering University, Harbin 150001, PR China (\texttt{yangyueyy@hrbeu.edu.cn})}
\quad
Jiang Zhou 
\thanks{College of Mathematical Sciences, Harbin Engineering University, Harbin 150001, PR China (\texttt{zhoujiang@hrbeu.edu.cn})}
}

\date{}

\begin{document}
\maketitle

\begin{abstract}
Using methods from spectral graph theory and semidefinite programming, we obtain sharp spectral bounds for several graph parameters related to distance colorings and packing, including the injective chromatic number, the open packing number, the injective chromatic index, and the strong chromatic index. The new spectral bounds improve several existing combinatorial bounds. Furthermore, we apply the obtained eigenvalue bounds on the first two parameters to estimate the code covering number and the open packing number of hypercubes, obtaining new exact values and strengthened bounds regarding existing literature results. The obtained results illustrate the power of combining spectral and semidefinite programming tools for tackling coloring and packing problems in graph theory and coding theory.  \\


\noindent \textbf{Keywords:} Graphs, Eigenvalues; Semidefinite programming; Injective chromatic number; Open packing number; Injective chromatic index; Strong chromatic index; Code covering number\\
\textbf{AMS classification:} 05C50, 05C69, 05C15, 15A18
\end{abstract}

\section{Introduction}

The interplay between structural properties of graphs and the spectra of their associated matrices is a central theme in discrete mathematics, and one that has driven much of the development of spectral graph theory. Classical work in the area has focused on fundamental graph invariants such as the independence number and the chromatic number, following the seminal work of Hoffman and Lovász, among others. In recent years, however, more refined colouring and packing parameters have been introduced and investigated in the literature.

In this work we investigate the following graph parameters. Let $G=(V,E)$ be a graph with $|V|=n$ vertices and $|E|=m$ edges.
\begin{itemize}
\item In an \emph{injective coloring}, for every vertex $v \in V$, all the neighbors of $v$ are assigned with distinct colors. The \emph{injective chromatic number} $\chi_i(G)$ is the minimum number of colors needed for an injective coloring. 
\item An \emph{open packing} of $G$ is a vertex subset $S$ if any two vertices in $S$ do not have a common neighbor in $G$. The \emph{open packing number} $\rho^o(G)$ is the maximum cardinality among all open packings of $G$. 
\item An edge coloring of $G$ is an \emph{injective edge coloring} if for any two distinct edges $e_1$ and $e_2$, the colors of $e_1$ and $e_2$ are distinct if they are at distance $1$ in $G$ or in a common triangle. The \emph{injective chromatic index} $\chi_i'(G)$ is the minimum number of colors needed for an injective edge coloring of $G$.  
\item A \emph{strong edge coloring} of $G$ is an assignment of colors to the edges of $G$ such that two distinct edges are colored differently if their distance is at most $1$. The \emph{strong chromatic index} $\chi_s'(G)$ is the smallest number of colors in any strong edge coloring. 
\end{itemize}

These four parameters exhibit quite a different behavior from their classical counterparts. Thus, specific tools to estimate them have been proposed in the literature -- for a survey paper on graph coloring parameters, we refer the reader to  \cite{CranstonDS}. The above parameters have become important in modeling constraints arising in areas such as communication networks (where interference constraints extend to vertices at distance two), scheduling, steganography and coding theory (where distance constraints on codes translate into colouring and packing problems on associated graphs). These more refined parameters can be viewed as instances of coloring problems on graph powers or line graphs, modeling constraints that propagate beyond immediate adjacency.

Despite their relevance for multiple applications, the above parameters are computationally intractable (see e.g. \cite{01, Shalu, McLoughlin,complexityedgeinjective,PG2023}), making the development of sharp and easily computable bounds essential; this motivates the search for eigenvalue bounds. Existing bounds for quantities such as $\chi_i(G)$ and $\chi_s'(G)$ are typically expressed in terms of local parameters like the maximum degree $\Delta$, order, or size \cite{02,01,MR1997}, and therefore often fail to capture the global graph symmetries and regularity (which are encoded in the graph spectrum).

Motivated by the above, in this paper we derive eigenvalue bounds for the aforementioned parameters by combining techniques from spectral graph theory and positive semidefinite programming. In particular, we encode combinatorial distance constraints as semidefinite conditions, allowing us to optimise over suitably structured matrix families rather than only the adjacency spectrum. We also show that our bounds are tight and can outperform the following known results:
\begin{itemize}
\item Lower bound on the \emph{injective chromatic number} that can improve a known result by Bre\v{s}ar, Samadi and Yero \cite[Theorem 6]{02}. As an application to hypercubes of our spectral bound for the injective chromatic number (in particular, to estimate the code covering number in error correction), we obtain new sufficient and necessary conditions for its exact value, and new lower bounds, extending the results by Hahn, Kratochvı\'il, \v{S}ir\'a\v{n} and Sotteau \cite{01}.

\item  Upper bounds on the \emph{open packing number} of hypercubes that, when applied to hypercubes, improves upon
previous results by Bre\v{s}ar et al. \cite{BKR2024} and Brimkov et al. \cite[Corollary 3.12]{BCG2025}.
    
\item Extension of a lower bound on the \emph{injective chromatic index} by Kostochka et al. \cite[Proposition 8]{KRX2021} which holds for $d$-regular bipartite graphs to general graphs. For some graphs, our lower bounds are better than the bound of Cardoso et al. \cite[Proposition 3.1]{Cardosofilomat}.

\item Lower bound on the \emph{strong chromatic index} that can improve a known result by Abiad and Reijnders \cite[Corollary 9]{AR2026}.
\end{itemize}

This paper is organized as follows. We start, in Section \ref{sec:preliminaries}, presenting the needed definitions and some preliminary auxiliary lemmas. In Section \ref{sec:boundsinjectivechrom}, we focus on the injective chromatic number $\chi_i(G)$ and establish a spectral lower bound (Theorem \ref{thm:evinjectivelowerbound}). We show that this bound is sharp for some graph classes, derive an analogous version for direct products, and show that the regular version (Corollary \ref{coro:injectiveboundregular}) can improve upon \cite[Theorem 6]{02}. In Section \ref{sec:boundopenpackingnumber}, we derive spectral upper bounds for the open packing number $\rho^o(G)$, which we extend to direct products and show that are attainable. Applications of the new bound to bipartite prism graphs yield improved bounds for the open packing number of hypercubes $Q_n$. In Section \ref{section:Injectivechromaticindex} we obtain spectral lower bounds for the injective chromatic index $\chi_i'(G)$, present classes of graphs attaining them (Theorem \ref{ICIbound}), and show that we obtain improvements upon \cite[Proposition 3.1]{Cardosofilomat} in some cases. In Section \ref{section:Strongchromaticindex}, we establish lower bounds for the strong chromatic index $\chi_s'(G)$ and identify families for which these bounds are tight. In particular, Theorems \ref{SCIbound1} and \ref{SCIbound2} improve the bound in \cite[Corollary 9]{AR2026} for certain graphs. Finally, Section \ref{sec:apps} contains applications: in particular, in Section \ref{sec:codecoverignumber} we apply our spectral approach to the code covering number of hypercubes (Theorem \ref{thm:codecoveringspectralbound}), extending known results, and in Section \ref{sec:bipartiteprisms} we investigate open packing in bipartite prism graphs, yielding further improvements on known results on hypercubes (Proposition \ref{propo:openpackingbipartiteprisms}).

\section{Preliminaries}\label{sec:preliminaries}

Let $V(G)$ and $E(G)$ denote the vertex set and edge set of a finite simple graph $G$, respectively. The \emph{adjacency matrix} of $G$, denoted by $A_G$, is a real symmetric matrix indexed by $V(G)$ such that $(A_G)_{uv} = 1$ if $uv\in E(G)$ and $(A_G)_{uv}= 0$ otherwise. The open neighborhood of a vertex $v$ in $G$ is denoted by $N_G(v)=\{u\in V(G):uv\in E(G)\}$. The maximum and minimum degrees in $G$ are denoted by $\Delta=\Delta(G)$ and $\delta=\delta(G)$, respectively. A vertex coloring of $G$ is \emph{proper} if adjacent vertices are assigned distinct colors. The \textit{chromatic number} of $G$, denoted by $\chi(G)$, is the smallest number of colors used in a proper coloring of $G$. The following spectral bound on $\chi(G)$ is a seminal result by Hoffman.

\begin{lem}[Hoffman bound] \cite{Hoffman} \label{HoffmanChi}
Let $G$ be a graph with at least one edge, and let $\lambda_1$ and $\lambda_n$ be the maximum and minimum eigenvalues of $A_G$, respectively. Then
\begin{eqnarray*}
\chi(G)\geq1-\frac{\lambda_1}{\lambda_n}.
\end{eqnarray*}
\end{lem}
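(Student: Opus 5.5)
The plan is the classical interlacing/quadratic-form argument applied to a partition of $V(G)$ into color classes. Fix a proper coloring with $k=\chi(G)$ colors and color classes $V_1,\dots,V_k$; each $V_j$ is an independent set, so the principal submatrix of $A_G$ on $V_j$ is zero. Let $x$ be a unit eigenvector of $A_G$ for $\lambda_1$. By Perron--Frobenius we may take $x\ge 0$, though the argument below needs only the eigenvector equation. Since $G$ has at least one edge, $\lambda_1>0$ and $\operatorname{tr}A_G=0$, so $\lambda_n<0$ and the bound makes sense.

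For each $j$, let $x^{(j)}$ be the vector agreeing with $x$ on $V_j$ and zero elsewhere, so $x=\sum_j x^{(j)}$. Form the $k\times k$ matrix $B$ with entries $B_{ij}=(x^{(i)})^\top A_G\, x^{(j)}$. Its diagonal vanishes because each $V_j$ is independent. With $s_j=\|x^{(j)}\|^2$ we have $\sum_j s_j=1$. The entries of $B$ come from a restricted quadratic form, so $B$ is controlled by the spectrum of $A_G$ in two ways:
\begin{itemize}
\item testing the vector $\mathbf 1$ gives $\mathbf 1^\top B\mathbf 1=x^\top A_G x=\lambda_1$;
\item applying Rayleigh's principle to $y=\sum_j c_j x^{(j)}$ gives $c^\top B c=y^\top A_G y\ge \lambda_n\|y\|^2=\lambda_n\sum_j c_j^2 s_j$.
\end{itemize}
In matrix form, $B-\lambda_n D\succeq 0$ with $D=\operatorname{diag}(s_1,\dots,s_k)$.

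For the key step, use the eigenvector equation $A_Gx=\lambda_1 x$ restricted to $V_i$. Its rows indexed by $V_i$ give $(x^{(i)})^\top A_G x=\lambda_1 s_i$, that is, $\sum_j B_{ij}=\lambda_1 s_i$. Since $B_{ii}=0$, the trace of $B-\lambda_n D$ is $-\lambda_n\sum_i s_i=-\lambda_n$. The row-sum identity says exactly that $B\mathbf 1=\lambda_1 D\mathbf 1$, so
\[
(B-\lambda_n D)\mathbf 1=(\lambda_1-\lambda_n)\,s, \qquad s=(s_1,\dots,s_k)^\top .
\]
Write $M=B-\lambda_n D$. Because $M\succeq 0$, Cauchy--Schwarz in the $M$-inner product gives $(\mathbf 1^\top M u)^2\le (\mathbf 1^\top M\mathbf 1)(u^\top M u)$. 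A cleaner route is to use $\mathbf 1^\top M\mathbf 1\le \operatorname{tr}(M)\cdot k$, which holds since $\lambda_{\max}(M)\le\operatorname{tr}M$ for $M\succeq0$ and $\|\mathbf 1\|^2=k$. Here $\mathbf 1^\top M\mathbf 1=\lambda_1-\lambda_n$ and $\operatorname{tr}M=-\lambda_n$, so $\lambda_1-\lambda_n\le -k\lambda_n$. Dividing by $-\lambda_n>0$ yields $k\ge 1-\lambda_1/\lambda_n$.

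The main obstacle is establishing $M\succeq 0$ rigorously when some $s_j=0$. This is harmless: those coordinates contribute zero rows and columns, since $x^{(j)}=0$, and the inequality $c^\top Mc\ge0$ holds for every $c\in\mathbb R^k$ directly from the Rayleigh bound. Note that the eigenvector equation is not even needed for the final inequality: only $\mathbf 1^\top M\mathbf 1=\lambda_1-\lambda_n$, $\operatorname{tr}M=-\lambda_n$, and $M\succeq0$ are used, which keeps the proof short.
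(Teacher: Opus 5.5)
Your proof is correct and complete. The paper gives no proof of this lemma (it is quoted from Hoffman), so the natural comparison is with the textbook interlacing argument. There one normalizes the restricted eigenvectors into an $n\times k$ matrix $S$ with orthonormal columns $x^{(j)}/\sqrt{s_j}$, and notes that $S^\top A_G S$ has zero diagonal and has $\lambda_1$ as an eigenvalue. Cauchy interlacing then puts its remaining $k-1$ eigenvalues above $\lambda_n$, so $0=\operatorname{tr}(S^\top A_GS)\ge\lambda_1+(k-1)\lambda_n$.

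You reach the same inequality $\lambda_1-\lambda_n\le -k\lambda_n$ without interlacing. Let $X$ be the $n\times k$ matrix with columns $x^{(1)},\dots,x^{(k)}$. Your unnormalized compression $M=X^\top(A_G-\lambda_nI)X$ is positive semidefinite by Rayleigh's principle alone. Interlacing is then replaced by the elementary estimate $\mathbf 1^\top M\mathbf 1\le k\,\lambda_{\max}(M)\le k\operatorname{tr}M$. A side benefit is that classes with $s_j=0$ need no separate treatment.

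Your route is also the paper's own semidefinite method in disguise. The matrix $W=(A_G-\lambda_nI)/(-\lambda_n)$ is positive semidefinite, has unit diagonal, and vanishes on non-edges of $G$. So Lemma~\ref{lema:balla} and the sandwich inequality give $\chi(G)\ge\vartheta(\overline{G})\ge\lambda_1(W)=1-\lambda_1/\lambda_n$ in one line. Your colour-class computation, with $X^\top WX=M/(-\lambda_n)$ of trace $1$, is just a direct check that $\lambda_1(W)\le\chi(G)$ for this $W$. This is the same device the paper applies to $(1-\lambda_n)^{-1}(AA^\top-\lambda_nI)$ in Theorem~\ref{thm:evinjectivelowerbound}.

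You can delete the paragraph deriving $B\mathbf 1=\lambda_1D\mathbf 1$, the abandoned Cauchy--Schwarz remark, and the ``main obstacle'' paragraph, since none of them is used.
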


A coloring that meets the Hoffman bound above is called a \textit{Hoffman coloring}.

\begin{lem}\cite[Proposition 2.3]{Blokhuis}\label{BlokhuisProposition 2.3}
If $G$ is an regular graph with a Hoffman coloring, then all color classes of the Hoffman coloring have equal size.
\end{lem}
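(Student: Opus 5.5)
The plan is to re-run the standard proof of the Hoffman bound and read off what equality forces. Let $G$ be $k$-regular on $n$ vertices, so $\lambda_1=k$ with eigenvector $\mathbf{1}$. Fix a Hoffman coloring with color classes $V_1,\dots,V_\chi$, where $\chi=1-k/\lambda_n$. Since each $V_j$ is independent, $\chi$ is at least $n/\alpha(G)$. So the heart of the argument is the Hoffman ratio bound $\alpha(G)\le n\,\frac{-\lambda_n}{k-\lambda_n}$ together with its equality case.

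First, for each class $V_j$ I will prove $|V_j|\le n\frac{-\lambda_n}{k-\lambda_n}$. Take the characteristic vector $x$ of $V_j$ and write $x=\frac{|V_j|}{n}\mathbf{1}+y$ with $y\perp\mathbf{1}$. Independence of $V_j$ gives $x^\top A_G x=0$. Expanding the quadratic form gives
\[
0=k\frac{|V_j|^2}{n}+y^\top A_G y\ \ge\ k\frac{|V_j|^2}{n}+\lambda_n\Bigl(|V_j|-\frac{|V_j|^2}{n}\Bigr),
\]
using $\|y\|^2=|V_j|-|V_j|^2/n$. Rearranging yields the bound above.

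Second, I sum the class bounds: $n=\sum_j|V_j|\le \chi\, n\frac{-\lambda_n}{k-\lambda_n}$. With $\chi=\frac{k-\lambda_n}{-\lambda_n}$ the right-hand side equals exactly $n$. Hence every inequality in the sum is tight, so each class satisfies $|V_j|=n\frac{-\lambda_n}{k-\lambda_n}=n/\chi$. All color classes therefore have equal size.

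The only delicate point is the case $\lambda_n=0$, where the bound degenerates; this is excluded because $G$ has an edge, which forces $\lambda_n<0$. I also need $\lambda_1=k$, with $\mathbf{1}$ as its eigenvector, to handle the decomposition of $x$, and regularity supplies exactly this. I expect no real obstacle: the essential observation is simply that the Hoffman chromatic bound for regular graphs is the sum of the per-class ratio bounds, so equality in the sum forces equality in every class.
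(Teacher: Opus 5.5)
Your argument is correct. The paper gives no proof of this lemma; it imports the statement from Blokhuis et al., so there is no internal proof to compare against. What you wrote is the standard self-contained argument. For a $k$-regular graph the Hoffman chromatic bound $1-k/\lambda_n$ is exactly $n$ divided by the ratio bound $n\frac{-\lambda_n}{k-\lambda_n}$, so a colouring attaining it forces every class to attain the ratio bound.

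Your first step could be replaced by a one-line appeal to Lemma~\ref{Haemersbound} with $\delta=\lambda_1=k$, which gives $|V_j|\le\alpha(G)\le n\frac{-\lambda_n}{k-\lambda_n}$ directly. Proving it by hand, as you do, has a small bonus. Tightness also forces equality in $y^\top A_G y\ge\lambda_n\|y\|^2$, so $A_G y=\lambda_n y$ and hence $A_G x=-\lambda_n(\mathbf{1}-x)$. In other words, every vertex outside $V_j$ has exactly $-\lambda_n$ neighbours in $V_j$, so the colour partition is equitable, which is slightly more than equal class sizes.
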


The \textit{independence number} of $G$, denoted by $\alpha(G)$, is the maximum cardinality of independent sets in $G$. The following result is another classical spectral bound for $\alpha(G)$, also due to Hoffman.

\begin{lem}[Ratio bound]\cite{{BrouwerHaemers}}\label{Haemersbound}
Let $G$ be an $n$-vertex graph with minimum degree $\delta$, and let $\lambda_1\geq\cdots\geq\lambda_n$ be eigenvalues of $A_G$. Then
\begin{eqnarray*}
\alpha(G)\leq n\frac{-\lambda_1\lambda_n}{\delta^2-\lambda_1\lambda_n}.
\end{eqnarray*}
\end{lem}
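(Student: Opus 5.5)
The plan is to prove the ratio bound by testing the Rayleigh quotient of a suitably shifted adjacency matrix against a vector built from the independent set. Let $S$ be an independent set of size $s=\alpha(G)$, let $\mathbf{1}$ be the all-ones vector and $\mathbf{1}_S$ the characteristic vector of $S$. For real parameters $a,b$ I take $x=a\mathbf{1}_S+b\mathbf{1}$ and use the two inequalities
\[
x^\top A_G x\ \ge\ \lambda_n\|x\|^2 \qquad\text{and}\qquad \mathbf{1}^\top A_G\mathbf{1}=2m\le \lambda_1 n .
\]
The second is also the Rayleigh quotient. The form $x^\top A_G x$ expands as $a^2\,\mathbf{1}_S^\top A_G\mathbf{1}_S + 2ab\,\mathbf{1}_S^\top A_G\mathbf{1} + b^2\,\mathbf{1}^\top A_G\mathbf{1}$. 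Independence of $S$ kills the first term, and the middle term equals $2ab\sum_{v\in S}\deg v$.

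Concretely, I put $e(S)=\sum_{v\in S}\deg v$, which satisfies $e(S)\ge \delta s$. A cleaner route is to write $x=\mathbf{1}_S-t\mathbf{1}$, so that
\[
x^\top A_G x=-2t\,e(S)+t^2\cdot 2m, \qquad \|x\|^2=s-2ts+t^2n .
\]
The inequality $x^\top A_G x\ge \lambda_n\|x\|^2$ with $\lambda_n<0$ then gives
\[
2t\,e(S)\le 2mt^2-\lambda_n\bigl(s-2ts+t^2n\bigr).
\]
Next I bound $e(S)\ge \delta s$ from below, and use $2m\le\lambda_1 n$ (for $t\ge0$) to bound the quadratic term from above. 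This yields
\[
2t\delta s\le \lambda_1 n t^2-\lambda_n s+2\lambda_n ts-\lambda_n n t^2,
\]
that is,
\[
s\bigl(2t(\delta-\lambda_n)+\lambda_n\bigr)\le n t^2(\lambda_1-\lambda_n).
\]

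The remaining step is to optimize over $t$. I rearrange to
\[
s\le \frac{n t^2(\lambda_1-\lambda_n)}{2t(\delta-\lambda_n)+\lambda_n},
\]
valid whenever the denominator is positive, and pick the $t$ that minimizes this. Setting $u=t$, the ratio $f(u)=u^2/(2u c+\lambda_n)$ with $c=\delta-\lambda_n$ has its minimum at $u=-\lambda_n/c$, where $f=\lambda_n^2/(-\lambda_n c)=-\lambda_n/c$. That gives
\[
s\le \frac{-n\lambda_n(\lambda_1-\lambda_n)}{c^2}.
\]
This does not match the target $\frac{-n\lambda_1\lambda_n}{\delta^2-\lambda_1\lambda_n}$.

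This mismatch is the main obstacle: replacing the average-degree term by $\lambda_1$ loses too much. The fix is to avoid that substitution. Instead I would use the matrix $M=A_G-\lambda_n I$, which is positive semidefinite, together with the sharper quantity $\lambda_1\ge x^\top A x/x^\top x$ applied to the vector $A_G\mathbf{1}_S$-type test vectors.

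The standard Haemers argument proceeds through interlacing on the quotient matrix of the partition $\{S,V\setminus S\}$:
\[
B=\begin{pmatrix}0 & e/s\\ e/(n-s) & d'\end{pmatrix}, \qquad e=e(S).
\]
Interlacing gives $\lambda_1\lambda_n\le \mu_1\mu_2=\det B=-e^2/(s(n-s))$. Hence
\[
-\lambda_1\lambda_n\ge \frac{e^2}{s(n-s)}\ge \frac{\delta^2 s}{n-s},
\]
using $e\ge\delta s$. Solving for $s$ gives $s(\delta^2-\lambda_1\lambda_n)\le -n\lambda_1\lambda_n$, which is exactly the claim. So the final plan is to use quotient-matrix interlacing (citing \cite{BrouwerHaemers}) rather than the bare Rayleigh quotient. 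The only checks left are that $\mu_1\le\lambda_1$ and $\mu_2\ge\lambda_n$, which follow from interlacing, and that $\mu_1\ge0\ge\mu_2$, which holds because $\det B\le 0$. Together these give $\mu_1\mu_2\ge\lambda_1\lambda_n$.
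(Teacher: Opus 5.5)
Your final argument is correct and is essentially the standard Brouwer--Haemers proof that the paper cites (the paper gives no proof of its own): interlacing for the quotient matrix $B$ of $\{S,V\setminus S\}$ gives $\lambda_1\lambda_n\le\mu_1\mu_2=\det B=-e^2/(s(n-s))\le-\delta^2 s/(n-s)$, tacitly assuming $G$ has an edge so that $s<n$ and $\delta^2-\lambda_1\lambda_n>0$. You can drop the preliminary Rayleigh-quotient attempt, which only yields the weaker $-n\lambda_n(\lambda_1-\lambda_n)/(\delta-\lambda_n)^2$ (equal to the target exactly when $G$ is regular), along with the undeveloped remark about $A_G-\lambda_n I$.
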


\begin{lem}\cite[Theorem 2.1.4]{Haemers}\label{subgraphbound}
Let $G$ be an $n$-vertex $d$-regular graph with adjacency eigenvalues $\lambda_1\geq\cdots\geq\lambda_n$, and let $G_1$ be an $n_1$-vertex $d_1$-regular induced subgraph of $G$. Then
\begin{eqnarray*}
nd_1-n_1d\geq(n-n_1)\lambda_n.
\end{eqnarray*}
\end{lem}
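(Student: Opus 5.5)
We prove the inequality $nd_1-n_1d\geq(n-n_1)\lambda_n$ of the last lemma above by testing the Rayleigh quotient of $A=A_G$ against a vector that is constant on $V(G_1)$ and constant on its complement. Write $S=V(G_1)$ and $T=V(G)\setminus S$. If $n_1=n$ there is nothing to prove, since then $d_1=d$ and both sides vanish; so assume $0<n_1<n$. Let $e(S,T)$ denote the number of edges between $S$ and $T$. Every vertex of $S$ has exactly $d$ neighbours in $G$ and exactly $d_1$ of them in $S$, because $G_1$ is induced and $d_1$-regular. Hence
\[
e(S,T)=n_1(d-d_1),
\]
and since $G$ is $d$-regular, the number of edges inside $T$ is $\tfrac12\bigl((n-n_1)d-e(S,T)\bigr)$.

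Now take the vector $x=a\mathbf 1_S+b\mathbf 1_T$. We have
\[
x^\top A x = a^2 n_1 d_1+2ab\,e(S,T)+b^2\bigl((n-n_1)d-e(S,T)\bigr),
\]
and by the Rayleigh principle $x^\top Ax\ge\lambda_n x^\top x=\lambda_n(a^2n_1+b^2(n-n_1))$ for every choice of $a,b$.

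We choose $x$ orthogonal to the all-ones vector, which is the Perron eigenvector of the regular graph. This means $an_1+b(n-n_1)=0$, and we take $a=n-n_1$, $b=-n_1$. Substituting $e=e(S,T)=n_1(d-d_1)$ and dividing through by $n_1(n-n_1)>0$ should give
\[
(n-n_1)d_1-2n_1(d-d_1)+\frac{n_1\bigl((n-n_1)d-n_1(d-d_1)\bigr)}{n-n_1}\;\ge\;\lambda_n n .
\]
Simplifying the left-hand side, it becomes $\frac{n}{n-n_1}(nd_1-n_1 d)$. The inequality then reads $\frac{n}{n-n_1}(nd_1-n_1d)\ge n\lambda_n$, which is exactly the claim after multiplying by $(n-n_1)/n$.

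The main obstacle is simply the algebra in this last simplification. As a check: the cross term contributes $-2n_1(n-n_1)\,n_1(d-d_1)$, and the total $n^2d_1-nn_1d$ emerges after collecting the $d_1$ and $d$ coefficients. Alternatively, one can avoid the expansion by interlacing. The quotient matrix of the partition $\{S,T\}$ is
\[
\begin{pmatrix} d_1 & d-d_1\\ \frac{n_1(d-d_1)}{n-n_1} & d-\frac{n_1(d-d_1)}{n-n_1}\end{pmatrix},
\]
whose eigenvalues are $d$ and its trace minus $d$, that is, $d_1-\frac{n_1(d-d_1)}{n-n_1}=\frac{nd_1-n_1d}{n-n_1}$. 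Interlacing gives that this eigenvalue is at least $\lambda_n$, which yields the same result.
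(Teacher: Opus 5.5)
Your proof is correct: the test vector $x=(n-n_1)\mathbf{1}_S-n_1\mathbf{1}_T$ has Rayleigh quotient exactly $\frac{nd_1-n_1d}{n-n_1}$, and your algebra checks out. The paper gives no proof of its own and only cites Haemers' Theorem 2.1.4, whose standard proof is your second route (interlacing for the $2\times 2$ quotient matrix of the partition $\{S,T\}$); your Rayleigh-quotient computation is that same argument written out explicitly.
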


For a given graph $G$, the \textit{two-step graph}  (also called the \emph{neighboring graph}) of $G$ \cite{02}, denoted as $G^{(2)}$, is the graph having the same vertex set as $G$ with an edge joining two vertices in $G^{(2)}$ if and only if they have a common neighbor in $G$. Obviously then it holds 
\begin{equation}\label{eq:relationstrongandchromaticnumbers}
\chi_i(G)=\chi(G^{(2)})\leq \chi(G^2).  \end{equation}
The chromatic number of $G^2$ has important applications in steganography, see \cite{FL2007}.

The celebrated Lov\'asz theta function $\vartheta(G)$ (also called the Lov\'asz number), was introduced in \cite{26} as an upper bound on the Shannon capacity of a graph. The Lov\'asz theta function satisfies the following \emph{sandwich inequality}:
\begin{eqnarray*}
\omega(G)\leq \vartheta(\overline{G})\leq \chi(G),
\end{eqnarray*}
where $\overline{G}$ denotes the complement of the graph $G$, $\omega(G)$ and $\chi(G)$ are respectively the clique number and the chromatic number of $G$. The equality cases were studied in \cite{YangZhou,Zhou}.

Let $(M)_{ij}$ denote the $(i,j)$-entry of a matrix $M$. For a real symmetry matrix $M$, let $\lambda_i(M)$ denote the $i$-th largest eigenvalue of $M$.

Next we present two preliminary lemmas that will be used to derive most of our main results.
\begin{lem}\cite{31}\label{lema:balla}
Let $G$ be an $n$-vertex graph, and let $\mathcal{B}$ be the set of $n\times n$ positive semidefinite real matrices $B$ such that $(B)_{ii}= 1$ for all $i\in V(G)$ and $(B)_{ij}= 0$ whenever $ij\notin E(G)$. Then
\begin{eqnarray*}
\vartheta(\overline{G})=\max_{B\in\mathcal{B}}\lambda_{1}(B).
\end{eqnarray*}
\end{lem}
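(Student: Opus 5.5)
We prove $\vartheta(\overline{G})=\max_{B\in\mathcal{B}}\lambda_1(B)$ by establishing the two inequalities separately. We use a standard characterization of the theta function (Lov\'asz \cite{26}): $\vartheta(\overline{G})$ is the maximum of $\mathbf{1}^T X\mathbf{1}$ over all positive semidefinite $X$ with $\operatorname{tr}X=1$ and $X_{ij}=0$ whenever $ij\notin E(G)$, $i\neq j$. The zero pattern refers to non-edges of $G$, i.e. edges of $\overline{G}$, which matches the primal SDP for $\vartheta(\overline{G})$.

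\emph{Step 1: $\max_{B}\lambda_1(B)\le\vartheta(\overline{G})$.} Take $B\in\mathcal{B}$ with top unit eigenvector $x$, and put $X=D_xBD_x$, where $D_x=\operatorname{diag}(x)$.
\begin{itemize}
\item $X$ is positive semidefinite, being congruent to $B$.
\item $X_{ij}=x_iB_{ij}x_j$ vanishes on non-edges, since $B_{ij}$ does.
\item $\operatorname{tr}X=\sum_i x_i^2B_{ii}=1$, since $B_{ii}=1$.
\item $\mathbf{1}^TX\mathbf{1}=x^TBx=\lambda_1(B)$.
\end{itemize}
So $X$ is feasible with objective value $\lambda_1(B)$, and the inequality follows.

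\emph{Step 2: $\vartheta(\overline{G})\le\max_B\lambda_1(B)$.} This direction is the main obstacle, because an optimal $X$ must be rescaled to have unit diagonal without losing the eigenvalue.

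\begin{itemize}
\item First, let $X$ be optimal and suppose every $X_{ii}>0$. Define $B=D^{-1/2}XD^{-1/2}$ with $D=\operatorname{diag}(X_{ii})$. Then $B$ is positive semidefinite, has unit diagonal and the right zero pattern, so $B\in\mathcal{B}$.
\item Take the test vector $y=(\sqrt{X_{ii}})_i$. Since $\sum_i X_{ii}=1$, $y$ is a unit vector. Moreover $y^TBy=\sum_{ij}X_{ij}=\mathbf{1}^TX\mathbf{1}=\vartheta(\overline{G})$.
\item By Rayleigh's principle, $\lambda_1(B)\ge y^TBy=\vartheta(\overline{G})$.
\end{itemize}

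It remains to handle indices with $X_{ii}=0$. Positive semidefiniteness then forces the whole $i$-th row and column of $X$ to vanish. We therefore apply the construction above to the principal submatrix on the support $S=\{i:X_{ii}>0\}$, obtaining $B_S$. We extend it to $B=B_S\oplus I$ on the remaining vertices. This $B$ is still positive semidefinite and lies in $\mathcal{B}$, because all off-diagonal entries outside $S$ are zero. Its top eigenvalue is at least $\lambda_1(B_S)\ge\vartheta(\overline{G})$.

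Combining the two steps gives equality. The maximum is attained because $\mathcal{B}$ is compact: its diagonal is fixed, so its entries are bounded by positive semidefiniteness.
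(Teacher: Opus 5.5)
The paper does not prove this lemma. It is quoted from \cite{31} and used as a black box in the proof of Theorem~\ref{thm:evinjectivelowerbound}, so there is no route to compare against. Your proof is correct and self-contained: it derives the lemma from Lov\'asz's trace-normalized SDP characterization of $\vartheta(\overline{G})$ \cite{26}. The rescaling $X=D_xBD_x$, with $x$ a unit top eigenvector of $B$, carries feasibility and objective value from $\mathcal{B}$ to the SDP. The rescaling $B=D^{-1/2}XD^{-1/2}$, with test vector $(\sqrt{X_{ii}})_i$, carries them back. Your handling of vanishing diagonal entries of $X$ via $B_S\oplus I$ is also sound.

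Two small points of precision:
\begin{itemize}
\item A top eigenvector may have zero entries, so $D_x$ can be singular and ``congruent'' is not literally right. Positive semidefiniteness of $X$ still holds, because $v^\top Xv=(D_xv)^\top B(D_xv)\ge 0$ for all $v$.
\item The closing compactness argument for $\mathcal{B}$ is unnecessary. Step~1 gives $\lambda_1(B)\le\vartheta(\overline{G})$ for every $B\in\mathcal{B}$, and Step~2 produces a specific $B\in\mathcal{B}$ with $\lambda_1(B)\ge\vartheta(\overline{G})$, so that $B$ already attains the maximum. What Step~2 does need is an optimal $X$. It exists because the primal feasible set is compact; alternatively, apply Step~2 to an arbitrary feasible $X$ and take suprema.
\end{itemize}
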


Given two graphs $G$ and $H$ with vertex sets $V(G)$ and $V(H)$, respectively, their \emph{direct product} $G\otimes H$ is the graph with vertex set $V(G)\times V(H)$, and two vertices $(u_1, v_1)$ and $(u_2,v_2)$ are adjacent in $G\otimes H$ if and only if $\{u_1,u_2\}\in E(G)$ and $\{v_1,v_2\}\in E(H)$. Next we show that the inequality in \cite[Theorem 1]{18} also holds for the direct product of general graphs.

\begin{lem}\label{lema:injectivekronecker}
For any two graphs $G$ and $H$, we have
\begin{eqnarray*}
\chi_i(G\otimes H)\leq \chi_i(G)\chi_i(H).
\end{eqnarray*}
\end{lem}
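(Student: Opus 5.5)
We prove the inequality with an explicit product coloring. Let $c_G\colon V(G)\to\{1,\dots,\chi_i(G)\}$ and $c_H\colon V(H)\to\{1,\dots,\chi_i(H)\}$ be optimal injective colorings of $G$ and $H$. On $G\otimes H$ we define
\[
c(u,v)=(c_G(u),c_H(v)).
\]
This uses at most $\chi_i(G)\chi_i(H)$ colors. It then suffices to show that $c$ is an injective coloring of $G\otimes H$. Equivalently, since $\chi_i(G)=\chi(G^{(2)})$ by \eqref{eq:relationstrongandchromaticnumbers}, we must show that $c$ is a proper coloring of $(G\otimes H)^{(2)}$.

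Take two distinct vertices $(u_1,v_1)$ and $(u_2,v_2)$ that have a common neighbor $(w,z)$ in $G\otimes H$. By the definition of the direct product, $w$ is adjacent to both $u_1$ and $u_2$ in $G$, and $z$ is adjacent to both $v_1$ and $v_2$ in $H$. Since the vertices are distinct, $u_1\neq u_2$ or $v_1\neq v_2$.

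\begin{itemize}
\item If $u_1\neq u_2$, then $u_1$ and $u_2$ are distinct neighbors of $w$ in $G$. Hence $c_G(u_1)\neq c_G(u_2)$, because $c_G$ is injective.
\item If $v_1\neq v_2$, the same argument in $H$ gives $c_H(v_1)\neq c_H(v_2)$.
\end{itemize}

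In either case the pairs $c(u_1,v_1)$ and $c(u_2,v_2)$ differ in at least one coordinate, so they are distinct. Therefore all neighbors of any vertex $(w,z)$ receive distinct colors, and $c$ is injective.

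Two remarks on generality. First, \cite[Theorem 1]{18} presumably assumed extra structure, such as regularity or connectivity; the argument above uses none. Second, if $G$ or $H$ has no edges, then $G\otimes H$ has no edges either. In that case a single color suffices, and $\chi_i\ge 1$ on both factors, so the bound still holds. The point that needs care is exactly the case split above: a common neighbor in the product forces common neighbors in both factors simultaneously, but distinctness may occur in only one coordinate. Handling this is why we take the coloring to be the ordered pair of colors rather than some coarser combination of them.
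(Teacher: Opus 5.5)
Your proof is correct and is essentially the paper's argument. The paper partitions $V(G)$ and $V(H)$ into open packings $S_i$ and $T_j$ and notes that each $S_i\times T_j$ is an open packing of $G\otimes H$; these sets are exactly the color classes of your product coloring $c(u,v)=(c_G(u),c_H(v))$, and your case split on $u_1\neq u_2$ versus $v_1\neq v_2$ supplies the verification that the paper leaves implicit.
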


\begin{proof}
There exist partitions $V(G)=S_1\cup\cdots\cup S_k$ ($k=\chi_i(G)$) and $V(H)=T_1\cup\cdots\cup T_m$ ($m=\chi_i(H)$) such that $S_1,\ldots,S_k$ are open packings of $G$ and $T_1,\ldots,T_m$ are open packings of $H$. Since $$V(G\otimes H)=\bigcup\limits_{\substack{1\leq i \leq k \\ 1\leq j\leq m}} S_i\times T_j$$ and $S_i\times T_j$ is an open packing of $G \otimes H$ for any $1\leq i\leq k,1\leq j\leq m$, we have $\chi_i(G\otimes H)\leq km=\chi_i(G)\chi_i(H)$.
\end{proof}

For a matrix $M$ and a graph $G$, let $$M^{\otimes k}=\underbrace{M\otimes\cdots\otimes M}_k$$ and $$G^{\otimes k}=\underbrace{G\otimes\cdots\otimes G}_k$$ denote the \emph{Kronecker product of $k$ copies of $M$} and the \emph{direct product of $k$ copies of $G$}, respectively.

We denote by $\mathcal{A}_G$ the set of real matrices $A$ indexed by $V(G)$ satisfying $(A)_{uv}=0$ if $u=v$ or $u,v$ are two distinct nonadjacent vertices in $G$. 

\section{Injective chromatic number}\label{sec:boundsinjectivechrom}

Injective colorings were introduced by Hahn, Kratochvı\'il, \v{S}ir\'a\v{n} and Sotteau in \cite{01}, in which they proved the inequality $\Delta \leq \chi_i(G) \leq \Delta^2 -\Delta +1$, where $\Delta$ is the maximum degree of $G$. In the same paper the authors showed applications of the injective chromatic number of the hypercube in the theory of error-correcting codes. Injective colorings have received a lot of attention in the literature since their introduction, see e.g. \cite{83,15,12,FK2002,HRS2009, KS2014,CKY2011}.

Note that an injective colouring is not necessarily a proper colouring, i.e., it is possible for two adjacent vertices to receive the same colour. Also, observe that
$\chi(G)\leq \chi_i(G)\leq \chi(G^2)$ where $G^2$ is the \emph{square power} of $G$ (a graph with the same vertex set as $G$ and where two vertices are adjacent if they are at distance at most $2$ in $G$).

The relation between $\chi_i(G)$ and $\Delta(G)$ has been widely investigated, see e.g. \cite{40,83,12,49,01,PG2025}. Doyon, Hahn and Raspaud \cite{15} established an interesting relation between the injective chromatic number and the maximum average degree of graphs. Also the injective chromatic number of some graph operations (including direct product, Cartesian product and lexicographic product) have been studied in \cite{18,82}.

In \cite{02} the following lower bound for $\chi_i(G)$ was obtained in terms of the order, size and open packing number of $G$.  

\begin{thm}\cite[Theorem 6]{02}\label{Bresarbound}
If $G$ is a connected graph of order $n\geq2$ and size $m$, then \begin{eqnarray*}
\chi_i(G)\geq\frac{1}{2}+\sqrt{\frac{1}{4}+\frac{2m-n}{\rho^o(G)}}.
\end{eqnarray*}
\end{thm}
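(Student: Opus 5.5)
We count edges of the two-step graph $G^{(2)}$ in two ways. Let $k=\chi_i(G)$ and fix an injective coloring with color classes $S_1,\dots,S_k$. Each $S_j$ is an open packing of $G$, so $|S_j|\le \rho^o(G)$ and hence $n\le k\,\rho^o(G)$.

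For the lower count, note that every vertex $v$ contributes a clique of size $\deg(v)$ to $G^{(2)}$, since all its neighbors pairwise share $v$. Summing over $v$ gives at least $\sum_v \binom{\deg v}{2}$ incidences, but distinct vertices may generate the same pair. That route leads to degree-squared sums rather than to $2m-n$, so I will instead work with the quotient graph on the color classes.

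Define an auxiliary graph $F$ on $\{1,\dots,k\}$ by joining $i\ne j$ whenever some edge of $G$ runs from $S_i$ to $S_j$. Also allow loops, recording when some class $S_i$ contains both endpoints of an edge of $G$. The key claim is that for each ordered pair of colors $(i,j)$, the number of edges of $G$ between $S_i$ and $S_j$ (counting edges inside $S_i$ when $i=j$) is at most $\rho^o(G)$, and in fact at most $|S_i|$. To see this, let $u\in S_j$. Then $u$ has at most one neighbor in $S_i$, because two neighbors of $u$ in $S_i$ would share the common neighbor $u$. So the edges between $S_i$ and $S_j$ form a matching-like structure: each vertex of $S_j$ has at most one neighbor in $S_i$, and symmetrically. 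Hence the number of such edges is at most $\min(|S_i|,|S_j|)\le\rho^o(G)$. Inside a single class, $G[S_i]$ has maximum degree at most $1$, so it has at most $|S_i|/2$ edges.

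Now sum over unordered pairs $i<j$ and over the diagonal:
$$m\le \binom{k}{2}\rho^o(G)+\sum_i \frac{|S_i|}{2}=\binom{k}{2}\rho^o(G)+\frac n2 .$$
This gives $2m-n\le k(k-1)\rho^o(G)$. Solving the quadratic $k^2-k-\frac{2m-n}{\rho^o(G)}\ge 0$ for the positive root yields $k\ge \frac12+\sqrt{\frac14+\frac{2m-n}{\rho^o(G)}}$, which is the statement.

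The main obstacle is checking the per-pair bound carefully. Between two distinct classes, "each vertex has at most one neighbor in the other class" holds from both sides, so the edges form a matching of size at most $\min(|S_i|,|S_j|)\le\rho^o(G)$. The diagonal term must be handled separately to produce exactly $n/2$, and this is what makes $-n$ appear. Connectivity and $n\ge 2$ are needed only to ensure $\rho^o(G)\ge1$ and that the bound is meaningful.
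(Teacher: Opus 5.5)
Your argument is correct and complete. The paper never proves this statement itself: it quotes it from Bre\v{s}ar, Samadi and Yero as a benchmark for Corollary~\ref{cor3.3}, so there is no in-paper proof to compare against.

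Your count is the natural direct proof. Each color class $S_i$ is an open packing, so $G[S_i]$ has maximum degree at most $1$ and hence at most $|S_i|/2$ edges, or $n/2$ edges over all classes. The edges between two distinct classes form a matching of size at most $\min\{|S_i|,|S_j|\}\le\rho^o(G)$. Together these give $m\le \frac n2+\binom{k}{2}\rho^o(G)$, which is exactly the inequality the form of the bound encodes.

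One correction to your closing remark: $\rho^o(G)\ge 1$ holds for every nonempty graph, so the hypotheses are not needed for that. What connectivity with $n\ge 2$ actually gives is $m\ge n-1$, hence $2m-n\ge n-2\ge 0$. This makes the radicand real and puts the smaller root $\frac12-\sqrt{\frac14+\frac{2m-n}{\rho^o(G)}}$ at or below $0$. Then $k\ge 1$ forces $k$ to be at least the larger root, which is the step you summarized as taking "the positive root".

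The auxiliary graph $F$ and the inequality $n\le k\rho^o(G)$ are never used and can be deleted.
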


We are now ready to derive the first spectral bound for $\chi_i(G)$, which later we will also use to estimate $\chi_i(G^{\otimes k})$.

\begin{thm}\label{thm:evinjectivelowerbound}
Let $G$ be an $n$-vertex graph without isolated vertices, and let $A\in\mathcal{A}_G$ be a matrix associated with $G$ such that all the row vectors of $A$ are unit vectors and $AA^\top\neq I$. Then
\begin{eqnarray*}
\chi_i(G)\geq \sup_{k}\sqrt[k]{\chi_i(G^{\otimes k})} \geq \inf_{k}\sqrt[k]{\chi_i(G^{\otimes k})} \geq \frac{\lambda_1-\lambda_n}{1-\lambda_n},
\end{eqnarray*}
where $\lambda_1=\lambda_1(AA^\top)$, $\lambda_n=\lambda_n(AA^\top)$.
\end{thm}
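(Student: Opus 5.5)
The plan is to reduce everything to the Lov\'asz theta function of the complement of a two-step graph, using Lemma~\ref{lema:balla} together with the sandwich inequality, and then to handle the direct powers with Lemma~\ref{lema:injectivekronecker}.

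\textbf{Step 1: the base case.} Put $B=AA^\top$. Then $B$ is positive semidefinite, and $(B)_{uu}=1$ because the rows of $A$ are unit vectors. For $u\neq v$ we have $(B)_{uv}=\sum_w (A)_{uw}(A)_{vw}$. A term is nonzero only if $w$ is a common neighbour of $u$ and $v$, since $A\in\mathcal{A}_G$. Hence $(B)_{uv}=0$ whenever $uv\notin E(G^{(2)})$.

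Next I locate $\lambda_n$. The trace of $B$ is $n$ and $B\neq I$, so $0\le \lambda_n<1\le \lambda_1$. Now set
\[
B'=\frac{B-\lambda_n I}{1-\lambda_n}.
\]
This matrix is positive semidefinite, has unit diagonal and the same off-diagonal support as $B$. It therefore lies in the family $\mathcal{B}$ for $G^{(2)}$, and $\lambda_1(B')=\frac{\lambda_1-\lambda_n}{1-\lambda_n}$. Combining \eqref{eq:relationstrongandchromaticnumbers}, the sandwich inequality and Lemma~\ref{lema:balla} gives
\[
\chi_i(G)=\chi(G^{(2)})\ge \vartheta(\overline{G^{(2)}})\ge \frac{\lambda_1-\lambda_n}{1-\lambda_n}.
\]
Equivalently, this is Hoffman's bound applied to the weighted adjacency matrix $B-I$ of $G^{(2)}$.

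\textbf{Step 2: the two outer inequalities.} Iterating Lemma~\ref{lema:injectivekronecker} gives $\chi_i(G^{\otimes k})\le \chi_i(G)^k$, hence $\sup_k\sqrt[k]{\chi_i(G^{\otimes k})}\le\chi_i(G)$. The inequality $\sup\ge\inf$ is trivial.

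\textbf{Step 3: the main inequality, for every $k$.} I must show $\chi_i(G^{\otimes k})\ge t^k$, where $t=\frac{\lambda_1-\lambda_n}{1-\lambda_n}$. A first attempt would apply Step 1 to $G^{\otimes k}$ with the matrix $A^{\otimes k}$. Its spectrum consists of products of the $\lambda_i$, so its extreme eigenvalues are $\lambda_1^k$ and (at least) $\lambda_n^k$. However, the resulting bound $(\lambda_1^k-\lambda_n^k)/(1-\lambda_n^k)$ is in general smaller than $t^k$; already for $k=2$ this fails. This is the main obstacle.

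I will avoid it by tensoring the normalized matrix instead. The matrix $B'^{\otimes k}$ is positive semidefinite, has unit diagonal, and $\lambda_1(B'^{\otimes k})=t^k$. Its entry at $((u_1,\dots,u_k),(v_1,\dots,v_k))$ is nonzero only if, for each $i$, either $u_i=v_i$ or $u_iv_i\in E(G^{(2)})$.

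It remains to check that every such distinct pair is adjacent in $(G^{\otimes k})^{(2)}$. For each coordinate choose $w_i$ as follows:
\begin{itemize}
\item if $u_iv_i\in E(G^{(2)})$, let $w_i$ be a common neighbour of $u_i$ and $v_i$ in $G$;
\item if $u_i=v_i$, let $w_i$ be any neighbour of $u_i$, which exists because $G$ has no isolated vertices.
\end{itemize}
Then $(w_1,\dots,w_k)$ is a common neighbour of the two vertices in $G^{\otimes k}$. So $B'^{\otimes k}$ lies in the family $\mathcal{B}$ for $(G^{\otimes k})^{(2)}$.

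Applying Lemma~\ref{lema:balla} and the sandwich inequality again gives
\[
\chi_i(G^{\otimes k})=\chi\bigl((G^{\otimes k})^{(2)}\bigr)\ge t^k,
\]
and taking $k$-th roots and the infimum over $k$ completes the chain.
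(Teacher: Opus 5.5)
Your proposal is correct and follows essentially the same route as the paper: normalize $AA^\top$ to the unit-diagonal positive semidefinite matrix $(1-\lambda_n)^{-1}(AA^\top-\lambda_n I)$, take its $k$-th Kronecker power, check that it is feasible in Lemma~\ref{lema:balla} for the two-step graph of $G^{\otimes k}$, and combine this with the sandwich inequality and Lemma~\ref{lema:injectivekronecker}. Your explicit choice of a neighbour $w_i$ in coordinates with $u_i=v_i$ spells out where the no-isolated-vertices hypothesis enters, which the paper uses only implicitly.
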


\begin{proof}
Since $AA^\top-\lambda_nI$ is positive semidefinite, $AA^\top\neq I$ and $(AA^\top)_{uu}=1$ for all $u$, we have $1-\lambda_n>0$. Let $M=(1-\lambda_n)^{-1}(AA^\top-\lambda_nI)$. Then $M$ is positive semidefinite and $(M)_{uu}=1$ for all $u$. From the definition of $\mathcal{A}_G$, we have
\begin{eqnarray*}
(M)_{uv}=(1-\lambda_n)^{-1}(AA^\top)_{uv}=0
\end{eqnarray*}
when $u,v$ are two distinct vertices satisfying $N_G(u)\cap N_G(v)=\emptyset$.

According to Lemma \ref{lema:injectivekronecker}, we get
\begin{eqnarray*}
\chi_i(G)\geq \sup_{k}\sqrt[k]{\chi_i(G^{\otimes k})}.
\end{eqnarray*}
So we only need to prove that $\chi_i(G^{\otimes k})\geq (\frac{\lambda_1-\lambda_n}{1-\lambda_n})^k$ for any positive integer $k$. Let $H_k$ be the two-step graph of the direct product graph $G^{\otimes k}$. Using the sandwich inequality, we have
\begin{eqnarray*}
\chi_i(G^{\otimes k})=\chi(H_k)\geq \vartheta(\overline{H_k}).
\end{eqnarray*}
If two distinct vertices $u=u_1\cdots u_k$ and $v=v_1\cdots v_k$ are nonadjacent in the two-step graph $H_k$, then there exists $i\in\{1,\ldots,k\}$ such that $N_G(u_i)\cap N_G(v_i)=\emptyset$ in $G$. In this case, we have $(M)_{u_iv_i}=0$ and
\begin{eqnarray*}
(M^{\otimes k})_{uv}=(M)_{u_1v_1}\cdots (M)_{u_kv_k}=0. 
\end{eqnarray*}
Since $M$ is a positive semidefinite matrix and $(M)_{uu}=1$ for all $u\in V(G)$, $M^{\otimes k}$ is also positive semidefinite matrix and each diagonal entry of $M^{\otimes k}$ is $1$. By Lemma \ref{lema:balla}, we have
\begin{eqnarray*}
\chi_i(G^{\otimes k})\geq\vartheta(\overline{H_k})\geq\lambda_1(M^{\otimes k}) = \lambda_1(M)^k =\left(\frac{\lambda_1-\lambda_n}{1-\lambda_n}\right)^k,
\end{eqnarray*}
which implies
\begin{eqnarray*}
\inf_{k}\sqrt[k]{\chi_i(G^{\otimes k})} \geq \frac{\lambda_1-\lambda_n}{1-\lambda_n}.
\end{eqnarray*}
Thus
\begin{align*}
\chi_i(G)&\geq \sup_{k}\sqrt[k]{\chi_i(G^{\otimes k})} \geq \inf_{k}\sqrt[k]{\chi_i(G^{\otimes k})} \geq \frac{\lambda_1-\lambda_n}{1-\lambda_n}.
 \end{align*}
\end{proof}

As a direct application of Theorem \ref{thm:evinjectivelowerbound} we obtain the following corollary.
\begin{cor}\label{cor3.3}
For an $n$-vertex graph $G$ without isolated vertices, let $A=D_G^{-\frac{1}{2}}A_G$, where $D_G$ is the diagonal degree matrix of $G$. Then
\begin{eqnarray*}
\chi_i(G)\geq \sup_{k}\sqrt[k]{\chi_i(G^{\otimes k})} \geq \inf_{k}\sqrt[k]{\chi_i(G^{\otimes k})} \geq \frac{\lambda_1-\lambda_n}{1-\lambda_n},
\end{eqnarray*}
where $\lambda_1=\lambda_1(D_G^{-\frac{1}{2}}A_G^2D_G^{-\frac{1}{2}})$, $\lambda_n=\lambda_n(D_G^{-\frac{1}{2}}A_G^2D_G^{-\frac{1}{2}})$.
\end{cor}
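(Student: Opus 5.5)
The plan is to apply Theorem \ref{thm:evinjectivelowerbound} with the concrete choice $A=D_G^{-\frac{1}{2}}A_G$. Three things need checking: that $A\in\mathcal{A}_G$, that its rows are unit vectors, and that $AA^\top\neq I$. After that, we identify $AA^\top$ with the matrix in the statement.

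\textbf{Membership in $\mathcal{A}_G$.} We have $(A)_{uv}=d_u^{-1/2}(A_G)_{uv}$, where $d_u\ge 1$ because $G$ has no isolated vertices. This entry vanishes whenever $u=v$ or $u,v$ are nonadjacent, since the same holds for $A_G$. So $A\in\mathcal{A}_G$.

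\textbf{Unit rows.} The row of $A$ indexed by $u$ has exactly $d_u$ nonzero entries, each equal to $d_u^{-1/2}$. Its squared norm is therefore $d_u\cdot d_u^{-1}=1$.

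\textbf{The product $AA^\top$.} Since $A_G$ is symmetric,
\[
AA^\top=D_G^{-\frac{1}{2}}A_GA_G^\top D_G^{-\frac{1}{2}}=D_G^{-\frac{1}{2}}A_G^2D_G^{-\frac{1}{2}}.
\]
Hence $\lambda_1$ and $\lambda_n$ in the corollary are exactly $\lambda_1(AA^\top)$ and $\lambda_n(AA^\top)$.

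\textbf{The condition $AA^\top\neq I$.} This is the only point that needs care. The off-diagonal entry $(AA^\top)_{uv}=(d_ud_v)^{-1/2}|N_G(u)\cap N_G(v)|$ is positive precisely when $u\neq v$ share a neighbour. Any vertex of degree at least $2$ has two neighbours with a common neighbour, so then $AA^\top\neq I$.

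The remaining case is that $G$ is a perfect matching, where $AA^\top=I$. I would handle it separately, either by noting it is implicitly excluded or by checking the bound directly. If $AA^\top=I$, then $\lambda_1=\lambda_n=1$ and the right-hand side reads $0/0$. Interpreting it as $1$, the inequality is trivial since $\chi_i\ge 1$. This degenerate case should be remarked on as the one caveat.

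With all hypotheses verified, the chain of inequalities follows immediately from Theorem \ref{thm:evinjectivelowerbound}.
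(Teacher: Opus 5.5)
Your proposal is correct and follows the paper's route exactly: the paper gives no separate proof and presents the corollary as a direct application of Theorem \ref{thm:evinjectivelowerbound} with $A=D_G^{-\frac{1}{2}}A_G$. For that choice, membership in $\mathcal{A}_G$, unit rows, and $AA^\top=D_G^{-\frac{1}{2}}A_G^2D_G^{-\frac{1}{2}}$ are immediate, as you check. You also observe that $AA^\top=I$ exactly when $G$ is a perfect matching, so the theorem's hypothesis fails and the right-hand side becomes $0/0$; this is a genuine degenerate case the paper's statement overlooks, and your treatment of it is fine, since there $\chi_i(G^{\otimes k})=1$ for all $k$.
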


Next we show a graph class for which our lower bound from Corollary \ref{cor3.3} outperforms the known bound from Theorem \ref{Bresarbound}(\cite[Theorem 6]{02}). We also provide a graph family that achieves the bound of Corollary \ref{cor3.3}, thus demonstrating its sharpness.

\begin{example}
Let $G=K_{1,n}$ be a star graph of order $n+1$ and size $n$, then we know $\chi_i(G)=n$ and $\rho^o(G)=2$. By Theorem \ref{Bresarbound} we get
$$\chi_i(G)\geq \frac{1}{2}+\sqrt{\frac{1}{4}+\frac{2|E(G)|-|V(G)|}{\rho^o(G)}}=\frac{\sqrt{2n-1}+1}{2}.$$
Normalizing the row vectors of the adjacency matrix $A_G$ to obtain the matrix $A$, then we have $\lambda_1(AA^\top)=n$ and $\lambda_{n+1}(AA^\top)=0$. Our lower bound in Corollary \ref{cor3.3} is
$$\frac{\lambda_1(AA^\top)-\lambda_{n+1}(AA^\top)}{1-\lambda_{n+1}(AA^\top)}=n > \frac{\sqrt{2n-1}+1}{2}.$$
For the complete bipartite graph $H=K_{m,n}$, the direct product $H^{\otimes k}$ is also a graph class attaining the bounds in Theorem \ref{thm:evinjectivelowerbound}.
Take $A=D^{-\frac{1}{2}}A_H$, where $D$ is the diagonal degree matrix of $H$. Then we have $\lambda_1(AA^\top)=\max\{m,n\}$ and $\lambda_{m+n}(AA^\top)=0$. Then
$$\chi_i(G)=\max\{m,n\}= \frac{\lambda_1(AA^\top)-\lambda_{m+n}(AA^\top)}{1-\lambda_{m+n}(AA^\top)}.$$
By Theorem \ref{thm:evinjectivelowerbound}, we have
$$\chi_i(G^{\otimes k})=\left(\frac{\lambda_1(AA^\top)-\lambda_{m+n}(AA^\top)}{1-\lambda_{m+n}(AA^\top)}\right)^k=(\max\{m,n\})^k$$
for any positive integer $k$.
\end{example}

We will further compare the lower bound of Corollary \ref{cor3.3} with Theorem \ref{Bresarbound} (\cite[Theorem 6]{02}) in Table \ref{graph6 5vertices}. In particular, we provide the proportion of graphs for which our bound is strictly better among all connected non isomorphic graphs of small order. 

\begin{rem}
Suppose that $A \in \mathcal{A}_G$ is a matrix satisfying the conditions in Theorem \ref{thm:evinjectivelowerbound}. Let $\lambda_1$ and $\lambda_n$ be the largest and smallest eigenvalues of the matrix $AA^\top$, respectively. If $\chi_i(G)= \frac{\lambda_1-\lambda_n}{1-\lambda_n}$, then by Theorem \ref{thm:evinjectivelowerbound}, we can obtain
\begin{eqnarray*}
\chi_i(G^{\otimes k})=\chi_i(G)^k= \left(\frac{\lambda_1-\lambda_n}{1-\lambda_n}\right)^k
\end{eqnarray*}
for any positive integer $k$.
\end{rem}

For an $n$-vertex graph $G$, let $\lambda(G)=\min_{1\leq i\leq n}|\lambda_i(A_G)|$.
\begin{cor}\label{coro:injectiveboundregular}
Let $G$ be a regular graph of degree $d>0$. Then
\begin{eqnarray*}
\chi_i(G)\geq \sup_{k}\sqrt[k]{\chi_i(G^{\otimes k})} \geq \inf_{k}\sqrt[k]{\chi_i(G^{\otimes k})}\geq\frac{d^2-\lambda(G)^2}{d-\lambda(G)^2}.
\end{eqnarray*}
\end{cor}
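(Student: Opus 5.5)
Since $G$ is $d$-regular, I will apply Theorem~\ref{thm:evinjectivelowerbound} (equivalently Corollary~\ref{cor3.3}) with $A=d^{-1/2}A_G$. This matrix lies in $\mathcal{A}_G$: its diagonal is zero and it vanishes on non-edges. Each row of $A_G$ has exactly $d$ ones, so each row of $A$ is a unit vector. We also have $AA^\top=\frac{1}{d}A_G^2$.

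The eigenvalues of $A_G^2$ are the squares $\lambda_i(A_G)^2$. Hence
\[
\lambda_1(AA^\top)=\frac{d^2}{d}=d,
\]
because the spectral radius of a $d$-regular graph is $d$, so $\max_i\lambda_i(A_G)^2=d^2$. Likewise
\[
\lambda_n(AA^\top)=\frac{\min_i\lambda_i(A_G)^2}{d}=\frac{\lambda(G)^2}{d},
\]
directly from the definition of $\lambda(G)$.

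It remains to verify $AA^\top\neq I$, i.e.\ $A_G^2\neq dI$. A $d$-regular graph satisfies $A_G^2=dI$ only when $d=1$, since for $d\ge 2$ two neighbours of a vertex share that common neighbour. So I will split into two cases.

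\emph{Case $d=1$.} Here $G$ is a perfect matching, $\lambda(G)=1$, and the claimed bound has zero denominator. I would treat it separately, or note that the intended statement assumes $d\ge2$. In this case $\chi_i(G)=1$ and $G^{\otimes k}$ is again a perfect matching, so $\chi_i(G^{\otimes k})=1$ and the chain holds trivially; only the final expression is undefined. This degenerate case is the only real obstacle, and I would handle it with a brief remark.

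\emph{Case $d\ge 2$.} Here $\lambda(G)^2\le$ the average of the $\lambda_i^2$, which equals $\operatorname{tr}(A_G^2)/n=d$. Equality would force all $\lambda_i^2=d$, i.e.\ $A_G^2=dI$, which is excluded. Hence $d-\lambda(G)^2>0$. Substituting into $\frac{\lambda_1-\lambda_n}{1-\lambda_n}$ gives
\[
\frac{d-\lambda(G)^2/d}{1-\lambda(G)^2/d}=\frac{d^2-\lambda(G)^2}{d-\lambda(G)^2},
\]
which is the claimed bound. The full chain of inequalities then follows directly from Theorem~\ref{thm:evinjectivelowerbound}.
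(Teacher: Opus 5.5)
Your proposal is correct and follows the paper's proof: take $A=d^{-1/2}A_G$, read off $\lambda_1(AA^\top)=d$ and $\lambda_n(AA^\top)=\lambda(G)^2/d$, and apply Theorem~\ref{thm:evinjectivelowerbound}. You also check that $AA^\top\neq I$ when $d\geq 2$, and you note that for $d=1$ this hypothesis fails and the bound becomes $0/0$; the paper's one-line proof leaves both points implicit.
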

\begin{proof}
Let $A=d^{-\frac{1}{2}}A_G$. Then $\lambda_1(AA^\top)=d$, $\lambda_n(AA^\top)=d^{-1}\lambda(G)^2$. The result now follows from Theorem \ref{thm:evinjectivelowerbound}.
\end{proof}

Table \ref{table:knowngraphs} in the Appendix illustrates that, for some Sagemath named graphs, our lower bound in Corollary \ref{coro:injectiveboundregular} can outperform the maximum degree bound, as well as the bound in Theorem \ref{Bresarbound} (\cite[Theorem 6]{02}). 

\section{Open packing number}\label{sec:boundopenpackingnumber}

Injective colorings of graphs are related to the so-called open packings, cf. \cite{PG2021} More precisely, an injective coloring of a graph is exactly a partition of its vertex set into open packings. A vertex subset $S\subseteq V(G)$ is called an \textit{open packing} of $G$ if any two vertices in $S$ do not have a common neighbor in $G$, that is, $N_G(u)\cap N_G(v)= \emptyset$ for any distinct vertices $u,v\in S$. As mentioned, there is a one-to-one correspondence between injective $k$-colorings of $G$ and $k$-partitions of $V(G)$ into $k$ open packings. So the injective chromatic number $\chi_i(G)$ is the minimum $k$ such that $G$ has a partition $V(G)=S_1\cup\cdots\cup S_k$, where $S_1,\ldots,S_k$ are open packings of $G$. The \textit{open packing number} of $G$, introduced in \cite{20}  and denoted $\rho^o(G)$, is the maximum cardinality among all open packings of $G$. Previous work on the open packing number can be found, for instance, in \cite{10,09,21,BKR2024}.

We start this section by showing an upper bound for the open packing number in terms of the eigenvalues of an associated matrix to a graph.

\begin{thm}\label{thm:evopenpackingnumberbound}
Let $A \in \mathcal{A}_G$ be a matrix associated with a graph $G$, and let $\lambda$ be a positive eigenvalue of $A$ with a positive unit eigenvector $x$. Then, the open packing number of $G$ satisfies
\begin{eqnarray*}
\rho^o(G)\leq\lambda^{-2}\max_{u\in V(G)}\frac{(AA^\top)_{uu}}{x_u^2},
\end{eqnarray*}
with equality if and only if there is a constant $c$ and there exists an open packing $S$ such that
\begin{equation}
cx_u=\sum_{v\in N_G(u)\cap S}(A)_{vu}x_v^{-1}~~~~~~(u\in V(G))\tag{4.1}
\end{equation}
and
\begin{equation}
\frac{(AA^\top)_{vv}}{x^2_v} = \max_{u\in V(G)}\frac{(AA^\top)_{uu}}{x^2_u}\tag{4.2}
\end{equation}
for all $v\in S$.
\end{thm}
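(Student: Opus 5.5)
The plan is a single Cauchy--Schwarz argument applied to a test vector built from the rows of $A$ indexed by an open packing. Let $S$ be any open packing of $G$; assume $S\neq\emptyset$, since otherwise there is nothing to prove. Write $a_v$ for the $v$-th row of $A$, viewed as a vector indexed by $V(G)$. Since $A\in\mathcal{A}_G$, the vector $a_v$ is supported on $N_G(v)$. Because $S$ is an open packing, the sets $N_G(v)$ for $v\in S$ are pairwise disjoint, so the rows $\{a_v\}_{v\in S}$ have pairwise disjoint supports. This disjointness is the only place the combinatorial hypothesis enters.

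Define the test vector
\[
z=\sum_{v\in S}x_v^{-1}a_v ,
\qquad\text{so that}\qquad
z_u=\sum_{v\in N_G(u)\cap S}(A)_{vu}x_v^{-1},
\]
where at most one term is nonzero for each $u$. First compute $\langle z,x\rangle=\sum_{v\in S}x_v^{-1}(Ax)_v=\sum_{v\in S}x_v^{-1}\lambda x_v=\lambda|S|$, using $Ax=\lambda x$ and $x_v>0$. Second, the disjoint supports make the $a_v$ mutually orthogonal, so
\[
\|z\|^2=\sum_{v\in S}x_v^{-2}\|a_v\|^2=\sum_{v\in S}\frac{(AA^\top)_{vv}}{x_v^2}\le |S|\max_{u\in V(G)}\frac{(AA^\top)_{uu}}{x_u^2}.
\]
Cauchy--Schwarz with $\|x\|=1$ gives $\lambda|S|\le\|z\|$. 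Squaring, which is legitimate because $\lambda>0$, and dividing by $|S|$ yields $|S|\le\lambda^{-2}\max_u (AA^\top)_{uu}/x_u^2$. Taking $S$ to be a maximum open packing gives the stated bound on $\rho^o(G)$.

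For the equality characterization, I will trace back the two inequalities used. Equality in the bound for a maximum open packing $S$ forces equality in both of them. Equality in Cauchy--Schwarz means $z=cx$ for some constant $c$, which is exactly (4.1). Equality in the averaging step means every $v\in S$ attains the maximum, which is (4.2). Conversely, suppose some open packing $S$ satisfies (4.1) and (4.2). Then both inequalities are tight for $S$, so $|S|$ equals the upper bound, and hence so does $\rho^o(G)\ge|S|$.

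The main obstacle is only bookkeeping, so that the converse direction works. Since $\lambda|S|>0$, the inner product $\langle z,x\rangle$ is positive, so $c>0$ and proportionality really gives equality in Cauchy--Schwarz rather than an inequality with the wrong sign. The value of $c$ is forced to be $\lambda|S|$.
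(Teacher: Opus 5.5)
Your proof is correct and is essentially the paper's argument: your test vector $z=\sum_{v\in S}x_v^{-1}a_v$ is exactly $A^\top y$ for the paper's vector $y$ ($y_u=x_u^{-1}$ on $S$, $0$ elsewhere), your orthogonality of the rows $a_v$, $v\in S$, is the paper's observation that $(AA^\top)_{uv}=0$ for distinct $u,v\in S$, and the Cauchy--Schwarz and averaging steps are the same. Your handling of the converse is somewhat more careful than the paper's, which only treats a maximum open packing: you show that any nonempty open packing satisfying (4.1)--(4.2) forces $c=\lambda|S|>0$, so $|S|$ attains the bound and hence so does $\rho^o(G)$, and your tacit requirement $S\neq\emptyset$ there is genuinely needed, since $S=\emptyset$ satisfies (4.1) with $c=0$ and (4.2) vacuously.
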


\begin{proof}
By $Ax=\lambda x$ and $\lambda>0$, we obtain $x=\lambda^{-1}Ax$. For any open packing $S$ of $G$, let $y=(y_1,\ldots,y_n)^\top$ be the vector such that
\begin{eqnarray*}
y_u=\begin{cases}x_u^{-1}~~~~~~~~~~~~~\mbox{if}~u\in S,\\
0~~~~~~~~~~~~~~~~\mbox{if}~u\notin S.\end{cases}
\end{eqnarray*}
By the Cauchy-Schwarz inequality, we have
\begin{eqnarray*}
|S|^2=(y^\top x)^2=(\lambda^{-1}y^\top Ax)^2\leq\lambda^{-2}y^\top AA^\top y.
\end{eqnarray*}
For any two distinct vertices $u,v$ in $S$, we have $N_G(u)\cap N_G(v)= \emptyset$. So $(AA^\top)_{uv}= 0$ if $u,v\in S$ and $u\neq v$. Then we get
\begin{eqnarray*}
|S|^2\leq\lambda^{-2}y^\top AA^\top y=\lambda^{-2}\sum_{u\in S}\frac{(AA^\top)_{uu}}{x_u^2}.
\end{eqnarray*}
To show the equality characterization, suppose that $S$ is a maximum open packing of $G$. Then $\rho^o(G)=|S|$ and
\begin{eqnarray*}
\rho^o(G)\leq\lambda^{-2}\max_{u\in V(G)}\frac{(AA^\top)_{uu}}{x_u^2},
\end{eqnarray*}
with equality if and only if there is a constant $c$ such that $A^\top y=cx$ and $$\frac{(AA^\top)_{vv}}{x^2_v} = \max_{u\in V(G)}\frac{(AA^\top)_{uu}}{x^2_u}$$ for all $v\in S$.
\end{proof}

If $A \in \mathcal{A}_G$ is the adjacency matrix of a graph $G$, then we obtain the following corollary.
\begin{cor}\label{cor:openpacking}
Let $\lambda$ be the largest eigenvalue of the adjacency matrix $A_G$ with a positive unit eigenvector $x$, and let $d_u$ be the degree of vertex $u$. Then
$$\rho^o(G) \leq \lambda^{-2} \max_{u\in V(G)}\frac{d_u}{x_u^2},$$
with equality if and only if there exists an open packing $S$ that satisfies conditions (4.1) and $\frac{d_v}{x^2_v} = \max_{u\in V(G)}\frac{d_u}{x^2_u}$ for all $v\in S$.
\end{cor}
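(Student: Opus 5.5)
The plan is to specialize Theorem \ref{thm:evopenpackingnumberbound} to the choice $A=A_G$, which clearly lies in $\mathcal{A}_G$: its diagonal is zero and it vanishes on non-adjacent pairs. First I will verify the hypotheses of that theorem. We need $\lambda>0$ together with a positive unit eigenvector. If $G$ has at least one edge and is connected, the Perron--Frobenius theorem gives the largest eigenvalue $\lambda=\lambda_1(A_G)>0$ with an entrywise positive eigenvector, which we normalize to unit length. The statement already assumes such a positive unit $x$ exists, so implicitly $G$ is connected, or at least $x$ is positive. Positivity of $\lambda$ then follows from $\lambda=x^\top A_G x>0$, because $x>0$ and $A_G\neq 0$; the case where $G$ has no edges is trivial and I will exclude it.

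Next comes the one computation needed. Since $A_G$ is symmetric, $(A_GA_G^\top)_{uu}=(A_G^2)_{uu}=\sum_{w}(A_G)_{uw}^2=d_u$. Substituting this into the bound of Theorem \ref{thm:evopenpackingnumberbound} gives
$$\rho^o(G)\le \lambda^{-2}\max_{u\in V(G)}\frac{d_u}{x_u^2}.$$

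For the equality characterization, I will carry over conditions (4.1) and (4.2) directly. Condition (4.2) becomes $d_v/x_v^2=\max_u d_u/x_u^2$ for all $v\in S$, using the diagonal computation above. Condition (4.1) with $A=A_G$ reads $cx_u=\sum_{v\in N_G(u)\cap S}x_v^{-1}$, which is exactly (4.1) with $(A)_{vu}=1$. Hence the equivalence is inherited verbatim from the theorem.

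There is no real obstacle here. The only point needing care is justifying that $\lambda>0$ when a positive eigenvector exists, and noting that the largest eigenvalue is the one carrying the Perron vector.
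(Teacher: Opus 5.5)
Your proposal is correct and is exactly the paper's argument: take $A=A_G\in\mathcal{A}_G$ in Theorem \ref{thm:evopenpackingnumberbound}, use $(A_GA_G^\top)_{uu}=d_u$, and inherit conditions (4.1) and (4.2) verbatim. Your check that $\lambda=x^\top A_G x>0$ is a detail the paper omits. One caveat, shared with the paper: the equality characterization must be read for a nonempty open packing $S$, because $S=\emptyset$ satisfies (4.1) with $c=0$ and satisfies (4.2) vacuously.
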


\begin{proof}
We know $(AA^\top)_{uu}=d_u$. By Theorem \ref{thm:evopenpackingnumberbound}, we get
$$\rho^o(G) \leq \lambda^{-2} \max_{u\in V(G)}\frac{d_u}{x_u^2},$$
with equality if and only if there exists an open packing $S$ that satisfies conditions (4.1) and $\frac{d_v}{x^2_v} = \max_{u\in V(G)}\frac{d_u}{x^2_u}$ for all $v\in S$.
\end{proof}

Using Theorem \ref{thm:evopenpackingnumberbound}, next we show spectral upper bounds for the open packing number of the direct product graph $G^{\otimes k}$.

\begin{thm}\label{thm:openpackingkronecker}
Let $A \in \mathcal{A}_G$ be a matrix associated with a graph $G$, and let $\lambda$ be a positive eigenvalue of $A$ with a positive unit eigenvector $x$. Then
$$\rho^o(G)\leq \sup_{k}\sqrt[k]{\rho^o(G^{\otimes k})}\leq \lambda^{-2} \max_{u\in V(G)}\frac{(AA^\top)_{uu}}{x_u^2}.$$
\end{thm}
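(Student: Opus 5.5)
The first inequality comes from a supermultiplicativity property, and the second from applying Theorem \ref{thm:evopenpackingnumberbound} to the $k$-fold Kronecker power $A^{\otimes k}$ and checking that the spectral quantity is multiplicative.

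\emph{First inequality.} We show $\rho^o(G^{\otimes k})\ge \rho^o(G)^k$. If $S$ is an open packing of $G$, then $S^k=S\times\cdots\times S$ is an open packing of $G^{\otimes k}$. Indeed, take distinct $u=u_1\cdots u_k$ and $v=v_1\cdots v_k$ in $S^k$, and pick a coordinate $i$ with $u_i\neq v_i$. A common neighbor $w$ of $u$ and $v$ in $G^{\otimes k}$ would have $w_i$ adjacent to both $u_i$ and $v_i$ in $G$, since $N_{G^{\otimes k}}(u)=N_G(u_1)\times\cdots\times N_G(u_k)$. This contradicts $S$ being an open packing. Taking $S$ maximum gives $\sqrt[k]{\rho^o(G^{\otimes k})}\ge\rho^o(G)$ for every $k$, and in particular for $k=1$. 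So $\rho^o(G)\le\sup_k\sqrt[k]{\rho^o(G^{\otimes k})}$.

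\emph{Second inequality.} It suffices to show that for every $k$,
\[\rho^o(G^{\otimes k})\le\Big(\lambda^{-2}\max_{u}\frac{(AA^\top)_{uu}}{x_u^2}\Big)^k.\]
We apply Theorem \ref{thm:evopenpackingnumberbound} to the graph $G^{\otimes k}$ with the matrix $A^{\otimes k}$, the eigenvalue $\lambda^k$ and the vector $x^{\otimes k}$. The hypotheses need checking.
\begin{itemize}
\item \emph{Membership in $\mathcal{A}_{G^{\otimes k}}$.} The entry $(A^{\otimes k})_{uv}=\prod_i(A)_{u_iv_i}$ is nonzero only if every $(A)_{u_iv_i}\neq0$. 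That forces $u_i\sim v_i$ in $G$ for all $i$ (which already excludes $u_i=v_i$), and this is exactly adjacency of $u$ and $v$ in $G^{\otimes k}$. In particular diagonal entries vanish, so $A^{\otimes k}\in\mathcal{A}_{G^{\otimes k}}$.
\item \emph{Eigenpair.} The mixed-product property gives $A^{\otimes k}x^{\otimes k}=(Ax)^{\otimes k}=\lambda^kx^{\otimes k}$. Here $\lambda^k>0$, and $x^{\otimes k}$ is positive with norm $\|x\|^k=1$.
\end{itemize}

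It remains to show the resulting bound factors. Again by the mixed-product property, $A^{\otimes k}(A^{\otimes k})^\top=(AA^\top)^{\otimes k}$, whose diagonal entries are $\prod_i(AA^\top)_{u_iu_i}$. Hence
\[\frac{((AA^\top)^{\otimes k})_{uu}}{(x^{\otimes k})_u^2}=\prod_i\frac{(AA^\top)_{u_iu_i}}{x_{u_i}^2}.\]
The quantities $(AA^\top)_{u_iu_i}$ are nonnegative, being squared row norms. The maximum of a product of nonnegative terms in independent coordinates is the product of the maxima, which equals $\big(\max_u (AA^\top)_{uu}/x_u^2\big)^k$. Combined with $(\lambda^k)^{-2}=(\lambda^{-2})^k$, this gives the displayed bound. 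Taking $k$-th roots and the supremum over $k$ finishes the proof.

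The only delicate point is the membership check $A^{\otimes k}\in\mathcal{A}_{G^{\otimes k}}$, specifically the zero diagonal. This holds because $A$ has zero diagonal, so any product containing a factor $(A)_{u_iu_i}$ vanishes. The rest is routine bookkeeping with Kronecker products.
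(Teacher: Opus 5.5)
Your proposal is correct and follows essentially the same route as the paper: the first inequality comes from $\rho^o(G)^k\le\rho^o(G^{\otimes k})$ via the product packing $S^k$, and the second from applying Theorem~\ref{thm:evopenpackingnumberbound} to $A^{\otimes k}$ with the eigenpair $(\lambda^k,x^{\otimes k})$. You also spell out two details the paper only asserts: that $A^{\otimes k}\in\mathcal{A}_{G^{\otimes k}}$, and that the maximum of $((AA^\top)^{\otimes k})_{uu}/(x^{\otimes k})_u^2$ equals the $k$-th power of the single-coordinate maximum.
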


\begin{proof}
Since $A^{\otimes k}\in \mathcal{A}_{G^{\otimes k}}$ is the matrix associated with the direct product graph $G^{\otimes k}$. Based on the characteristic equation $A x=\lambda x$, we have
$$A^{\otimes k}x^{\otimes k}=\lambda^kx^{\otimes k}.$$
So $x^{\otimes k}$ is the positive unit eigenvector of matrix $A^{\otimes k}$ associated with the positive eigenvalue $\lambda^k$.
Now using Theorem \ref{thm:evopenpackingnumberbound}, we have
$$\rho^o(G^{\otimes k}) \leq \lambda^{-2k} \bigg(\max_{u\in V(G)}\frac{(AA^\top)_{uu}}{x_u^2}\bigg)^k.$$
Next, suppose that $S$ is the maximum open packing of $G$ such that $|S|=\rho^o(G)$. Then $S^k=\{u_1\cdots u_k:u_1,\ldots,u_k\in S\}$ is an open packing of direct product graph $G^{\otimes k}$. So we have ${\rho^o(G)}^k \leq \rho^o(G^{\otimes k})$.
Hence we obtain
\begin{align*}
&\rho^o(G)\leq \sup_{k}\sqrt[k]{\rho^o(G^{\otimes k})}\leq \lambda^{-2} \max_{u\in V(G)}\frac{(AA^\top)_{uu}}{x_u^2}.
\end{align*}
\end{proof}

Below we present a family of direct product graphs attaining the bound in Theorem \ref{thm:openpackingkronecker}.

\begin{example}
For a complete bipartite graph $G= K_{m,n} (m\leq n)$, we know that $\rho^o(G)=2$. Let $A=A_G$ be the adjacency matrix of graph $G$, and let $\lambda=\sqrt{mn}$ be the largest eigenvalue of $A$ with a positive unit eigenvector $x$. Then $x_u=\frac{1}{\sqrt{2m}}$ if $u$ is a vertex belongs to the color class of size $m$, and $x_u=\frac{1}{\sqrt{2n}}$ if $u$ is a vertex belongs to the color class of size $n$. Then $$\max_{u\in V(G)}\frac{(AA^\top)_{uu}}{x_u^2}=2mn$$ and
$$\rho^o(G)=\lambda^{-2}\max_{u\in V(G)}\frac{(AA^\top)_{uu}}{x_u^2}=2.$$
By Theorem \ref{thm:openpackingkronecker}, we have
$$\rho^o(G^{\otimes k})=\rho^o(G)^k=\bigg( \lambda^{-2} \max_{u\in V(G)}\frac{d_u}{x_u^2}\bigg)^k=2^k$$
for any positive integer $k$.
\end{example}

\section{Injective chromatic index}\label{section:Injectivechromaticindex}

Injective edge colorings were introduced in \cite{Cardosofilomat} motivated by the socalled Packet Radio Network problem. The injective chromatic index is also called induced star arboricity in \cite{Axenovich}. Bounds for $\chi_i'(G)$ have been obtained in the literature, see e.g. \cite{Bradshaw,Cardosofilomat,KRX2021}.

For an induced subgraph $H$ of a graph $G$, we say that $H$ is an \textit{induced star forest} if each component of $H$ is a star. To derive our bounds in this section, we will use the following known result.
\begin{lem}\cite[Proposition 2.2]{Cardosofilomat}\label{induced star}
Let $G$ be a graph. Then $\chi_i'(G)=k$ if and only if $k$ is the minimum positive integer for which $E(G)$ can be partitioned into non-empty subsets $E_1,\ldots,E_k$ such that $E_i$ induces an induced star forest for every $i$.
\end{lem}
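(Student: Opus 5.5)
The statement just above is Lemma \ref{induced star}. The plan is to prove it by showing that, for a fixed edge subset $F\subseteq E(G)$, the following are equivalent: (a) $F$ can be a color class of an injective edge coloring, that is, no two edges of $F$ are at distance $1$ or lie in a common triangle; (b) the subgraph spanned by $F$ is an induced subgraph of $G$ all of whose components are stars. Once this is in hand, injective edge colorings with $k$ colors correspond exactly to partitions of $E(G)$ into $k$ nonempty classes satisfying (b). Empty classes can be discarded without increasing $k$, so the minima coincide and the lemma follows.

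First I fix the distance convention. Two distinct edges are at distance $1$ when they are disjoint and some edge of $G$ joins an endpoint of one to an endpoint of the other. Let $H$ be the graph with edge set $F$ and vertex set $V(F)$.

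\emph{(a) $\Rightarrow$ (b).} Each step excludes one obstruction.
\begin{itemize}
\item $H$ contains no triangle. Any two of its edges would lie in a common triangle, which (a) forbids.
\item $H$ contains no path $P_4$ with edges $e_1e_2e_3$. Then $e_1$ and $e_3$ are disjoint, and $e_2$ joins them, so they are at distance $1$.
\item So every component of $H$ is triangle-free with no $P_4$, hence a star. It remains to show $H$ is induced.
\item Suppose $xy\in E(G)\setminus F$ with $x,y\in V(F)$. Pick $e\ni x$ and $f\ni y$ in $F$.
\item If $e\neq f$ and they are disjoint, the edge $xy$ puts them at distance $1$.
\item If $e\neq f$ and they share a vertex $z$, then $x,y,z$ form a triangle of $G$ containing both $e$ and $f$.
\item If $e=f$, then $xy=e\in F$, a contradiction.
\end{itemize}

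\emph{(b) $\Rightarrow$ (a).} Let $e,f\in F$.
\begin{itemize}
\item If $e,f$ lie in the same star, they share the center. A common triangle would need the edge joining their leaves. That edge is in $G$ and has both ends in $V(F)$, so it lies in $H$ by inducedness, which is impossible in a star.
\item If $e,f$ lie in different stars, they are disjoint, and a common triangle is impossible.
\item An edge of $G$ joining them would have both ends in $V(F)$, so it lies in $H$ by inducedness. It would then join two components of $H$, a contradiction.
\end{itemize}

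The only delicate point is the precise meaning of ``distance $1$'' and ``common triangle.'' In particular, two adjacent edges that are not in a common triangle must be allowed the same color, which is exactly the situation of two edges of the same star. I will state this convention explicitly at the start and then run the case checks above.
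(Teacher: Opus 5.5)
Your proof is correct. The paper does not prove this lemma; it quotes it from Cardoso et al.\ (Proposition 2.2 there), so there is no in-paper argument to compare against. Your reduction to a class-by-class equivalence is the natural direct verification: a color class of an injective edge coloring is exactly an edge set spanning an induced star forest. The inducedness you establish is also exactly what the paper later uses in Theorem~\ref{ICIbound} to get $|E_i|\le\alpha(G)$, by taking the leaves of each star, with one endpoint for each $K_2$ component.

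Three small points deserve an explicit line.
\begin{itemize}
\item In the case where $e$ and $f$ share a vertex $z$, note that $z\notin\{x,y\}$. Otherwise $xy$ would coincide with $e$ or $f$ and so lie in $F$.
\item The step ``triangle-free with no $P_4$, hence a star'' should say that any cycle of length at least $4$ contains a $P_4$. So each component of $H$ is a tree of diameter at most $2$, hence a star.
\item The statement tacitly assumes $E(G)\neq\emptyset$, since an edgeless graph admits no partition of $E(G)$ into non-empty parts.
\end{itemize}
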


In \cite[Proposition 8]{KRX2021}, Kostochka et al. proved that $\chi_i'(G)\geq d$ when $G$ is a $d$-regular bipartite graph. For a $d$-regular bipartite graph $G$, the minimum eigenvalue of $A_G$ is $-d$. We extend \cite[Proposition 8]{KRX2021} as follows.

\begin{thm}\label{ICIbound}
Let $G$ be an $n$-vertex graph with $m\geq1$ edges and minimum degree $\delta$, and let $\lambda_1\geq\cdots\geq\lambda_n$ be eigenvalues of $A_G$. Then
\begin{eqnarray*}
\chi_i'(G)\geq\frac{m(\delta^2-\lambda_1\lambda_n)}{-n\lambda_1\lambda_n}.
\end{eqnarray*}
Moreover, if $G$ is $d$-regular, then
\begin{eqnarray*}
\chi_i'(G)\geq\frac{d(d-\lambda_n)}{-2\lambda_n}.
\end{eqnarray*}
\end{thm}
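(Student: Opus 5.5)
Let $k=\chi_i'(G)$. By Lemma \ref{induced star} we can partition $E(G)$ into $E_1,\ldots,E_k$, each inducing an induced star forest. By pigeonhole some class $E_j$ has $|E_j|\geq m/k$, so it suffices to bound the number of edges in an induced star forest $F$ of $G$ from above by $n(-\lambda_1\lambda_n)/(\delta^2-\lambda_1\lambda_n)$. That is, we want to show $|E(F)|\leq\alpha(G)$ and then invoke the ratio bound (Lemma \ref{Haemersbound}). Rearranging then gives
\[
\chi_i'(G)\geq\frac{m}{|E_j|}\geq\frac{m(\delta^2-\lambda_1\lambda_n)}{-n\lambda_1\lambda_n}.
\]

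The key combinatorial step is to exhibit, from an induced star forest $F$ with edge set $E_j$, an independent set of $G$ of size $|E_j|$. Let $F$ be the subgraph induced on the vertices covered by $E_j$. Since $F$ is an induced subgraph of $G$ whose components are stars, the vertex-disjoint stars have no edges of $G$ between them. A star $K_{1,t}$ with $t\geq 1$ contributes $t$ edges, and its $t$ leaves are pairwise nonadjacent in $G$, because $F$ is induced and the leaves of a star are nonadjacent. Taking all leaves of all components (for a $K_{1,1}$ component, pick one endpoint) gives a set $I$ with $|I|=\sum t=|E_j|$. No two vertices of $I$ are adjacent: within a component they are distinct leaves, and across components there are no edges since $F$ is induced. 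Hence $|E_j|\leq\alpha(G)$.

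Combining the two steps with Lemma \ref{Haemersbound} gives the general bound. Note that $-\lambda_1\lambda_n>0$ because $m\geq1$, so the division is legitimate. For the $d$-regular case, substitute $\delta=\lambda_1=d$ and $m=nd/2$:
\[
\frac{(nd/2)(d^2-d\lambda_n)}{-nd\lambda_n}=\frac{d(d-\lambda_n)}{-2\lambda_n}.
\]

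The only delicate point is the middle step, namely checking that an induced star forest really yields an independent set as large as its edge count. This holds because each star's leaves are independent and the induced condition rules out edges between components. The rest is a direct substitution.
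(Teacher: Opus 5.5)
Your proposal is correct and follows the paper's proof essentially verbatim: partition $E(G)$ into $\chi_i'(G)$ induced star forests via Lemma~\ref{induced star}, bound each class by $\alpha(G)$, apply the ratio bound (Lemma~\ref{Haemersbound}), and substitute $\delta=\lambda_1=d$, $m=nd/2$ in the regular case. Your leaf-selection argument (all leaves of each star, and one endpoint of each $K_{1,1}$ component) supplies the justification for $|E_i|\le\alpha(G)$, which the paper states without proof.
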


\begin{proof}
By Lemma \ref{induced star}, we know that there exists a partition $E(G)=E_1\cup\cdots\cup E_k$ such that $k=\chi_i'(G)$ and $E_i$ induces an induced star forest for every $i$. Hence $\max_{1\leq i\leq k}|E_i|\leq\alpha(G)$, where $\alpha(G)$ is independence number of $G$. Then
\begin{eqnarray*}
\chi_i'(G)\alpha(G)\geq m.
\end{eqnarray*}
By Lemma \ref{Haemersbound}, we have
\begin{eqnarray*}
\chi_i'(G)\geq\frac{m(\delta^2-\lambda_1\lambda_n)}{-n\lambda_1\lambda_n}.
\end{eqnarray*}
If $G$ is $d$-regular, then $\lambda_1=d$, $m=nd/2$ and
\begin{eqnarray*}
\frac{m(\delta^2-\lambda_1\lambda_n)}{-n\lambda_1\lambda_n}=\frac{d(d-\lambda_n)}{-2\lambda_n}.
\end{eqnarray*}
\end{proof}

Next we show a class of graphs attaining the bound in Theorem \ref{ICIbound}.
\begin{example}
Let $G=K_{n,\ldots,n}$ be the complete $p$-partite graph with $pn$ vertices. Then $G$ is regular of degree $(p-1)n$ and the minimum eigenvalue of $A_G$ is $-n$. By Theorem \ref{ICIbound}, we have
\begin{eqnarray*}
\chi_i'(G)\geq\frac{p(p-1)n}{2}.
\end{eqnarray*}
Since $E(G)$ can be partitioned into $\frac{p(p-1)n}{2}$ induced star forests, we have 
\begin{eqnarray*}
\chi_i'(G)=\frac{p(p-1)n}{2}.
\end{eqnarray*}
\end{example}

In \cite[Proposition 3.1]{Cardosofilomat} the following lower bound on the injective chromatic index of $G$ in terms of clique number $\omega(G)$ was obtained. Later, we will compare it with Theorem \ref{ICIbound}.
\begin{thm}\cite[Proposition 3.1]{Cardosofilomat}\label{Cardosobound}
For any connected graph $G$ of order $n \geq 2$, it holds that
\begin{eqnarray*}
\chi_i'(G)\geq \frac{\omega(G)(\omega(G)-1)}{2}.
\end{eqnarray*}
\end{thm}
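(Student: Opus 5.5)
The statement to prove is the cited result of Cardoso et al.: $\chi_i'(G)\geq \omega(G)(\omega(G)-1)/2$. I would work inside a maximum clique and show that its edges must all receive distinct colors. By Lemma \ref{induced star}, this is the same as showing that no color class can contain two of them.

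Let $K$ be a clique of $G$ with $|K|=\omega=\omega(G)$. Then $K$ spans exactly $\omega(\omega-1)/2$ edges. Fix an optimal injective edge coloring, which by Lemma \ref{induced star} is a partition $E(G)=E_1\cup\cdots\cup E_k$ into induced star forests. The core step is the claim that $|E_i\cap E(K)|\leq 1$ for every $i$. Summing this over $i$ gives $k\geq |E(K)|=\omega(\omega-1)/2$, as required.

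To prove the claim, suppose two distinct clique edges $e_1,e_2\in E(K)$ lie in the same class $E_i$. I would split into two cases according to whether they share an endpoint.

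\emph{Case 1: the edges share an endpoint.} Write $e_1=ab$ and $e_2=ac$. Since $K$ is a clique, $bc\in E(G)$, so $e_1$ and $e_2$ lie in a common triangle $abc$. The definition of an injective edge coloring forces them to get different colors. Equivalently, the subgraph induced by $\{a,b,c\}$ contains a triangle, so it is not a star forest.

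\emph{Case 2: the edges are disjoint.} Write $e_1=ab$ and $e_2=cd$. The edge $bc\in E(G)$ joins an endpoint of $e_1$ to an endpoint of $e_2$, so $e_1$ and $e_2$ are at distance $1$. Again they must receive distinct colors.

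In either case we contradict $e_1,e_2\in E_i$, which proves the claim. The only step needing care is matching the definition: the forbidden configurations are edges at distance $1$ or edges in a common triangle, and one must check that this covers both cases. Note that connectivity and the condition $n\geq 2$ are not actually needed. If $\omega=1$ the bound is $0$ and holds trivially, so we may assume $\omega\geq 2$.
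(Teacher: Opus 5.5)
Your proof is correct. Reading ``distance $1$'' as the distance between endpoints (so disjoint edges joined by an edge are at distance $1$) is the intended convention, and under it any two edges of a maximum clique conflict, so all $\omega(\omega-1)/2$ of them need distinct colors. The paper gives no proof here and simply quotes \cite[Proposition 3.1]{Cardosofilomat}; your clique argument is the standard proof, equivalent to combining that $\chi_i'$ cannot increase on subgraphs with $\chi_i'(K_\omega)=\binom{\omega}{2}$, and the detour through Lemma \ref{induced star} is harmless but not needed.
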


In Table \ref{table:injectivechromaticindex} in the Appendix we compare Theorem \ref{ICIbound} and Theorem \ref{Cardosobound} (\cite[Proposition 3.1]{Cardosofilomat}) for some Sagemath named graphs and for some graph classes. The computational experiments show that our spectral bound on the injective chromatic index is also tight, and that for some graphs, our bound is better than \cite[Proposition 3.1]{Cardosofilomat}.

\section{Strong chromatic index}\label{section:Strongchromaticindex}

Given a graph $G$, the square of the line graph is denoted $L(G)^2$. Alternatively, the strong chromatic index of $G$ is the chromatic number of $L(G)^2$, and according to \eqref{eq:relationstrongandchromaticnumbers}, we have $\chi_s'(G)=\chi{(L(G)^2)}\geq \chi_i(L(G))$. Also, note that from the definitions, it holds $\chi_i'(G)\leq\chi_s'(G)$ for any graph $G$, where $\chi_s'(G)$ is strong chromatic index of $G$.

 A trivial upper bound of the strong chromatic index is $\chi_s'(G)\leq 2\Delta^2-2\Delta+1$. More refined upper bounds on the strong chromatic index have also been obtained in the literature, see e.g. \cite{FSGT1989,MR1997,BJ2018}. Erd\H{o}s and Ne\v{s}et\v{r}il \cite{FSGT1989} posed the following conjecture on the parameter of interest. 

\begin{conjecture}\cite[Section1]{FSGT1989}\label{conjecture}
\[
\begin{cases}
    \chi_s'(G) \leq\frac{5}{4} \Delta^2,~~~~~~~~~~~~~~~~~~~~~~if ~\Delta~ is~ even,\\
    \chi_s'(G)\leq \frac{5\Delta^2-2\Delta+1}{4},~~~~~~~~~~~~~~~~if ~\Delta~ is~ odd.
\end{cases}
\]
\end{conjecture}

The conjecture above has triggered quite some work, see e.g. \cite{BJ2018,Andersen1992,FKS2005,Mahdian2000}. 

Most existing results on the strong chromatic index consist of upper bounds. In \cite[Corollary 9]{AR2026}, a spectral lower bound for the strong chromatic index of a $k$-regular graph was shown using the adjacency eigenvalues of the line graph $L(G)$.

\begin{cor}\cite[Corollary 9]{AR2026}\label{AR2026}
Let $G$ be a $k$-regular graph. Let $2(k-1)=\theta'_0 >\cdots >\theta'_{d'}$ be
the distinct adjacency eigenvalues of $L(G)$, with $d' \geq 2$. Let $\theta'_i$ be the largest eigenvalue such that $\theta'_i \leq -1$. Then
\begin{eqnarray*}
\chi_s'(G) \geq \frac{\mid E\mid}{\big\lfloor\mid E \mid \frac{\theta'_0+\theta'_i\theta'_{i-1}}{(\theta'_0-\theta'_i)(\theta'_0-\theta'_{i-1})}\big\rfloor}.
\end{eqnarray*}
\end{cor}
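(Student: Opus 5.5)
The plan is to read the bound as an inertia-type (Cvetković-style weighted) upper bound on the size of a color class, followed by the counting inequality $\chi_s'(G)\,\alpha(L(G)^2)\ge |E|$. A strong edge coloring partitions $E$ into induced matchings. Each induced matching is a set of vertices of $L(G)$ at pairwise distance at least $3$ in $L(G)$, that is, a coclique of $L(G)^2$. Hence $\chi_s'(G)\ge |E|/\alpha_2(L(G))$, where $\alpha_2$ denotes the maximum size of a set of pairwise distance $\ge 3$ vertices. Since $\chi_s'(G)$ is an integer, it suffices to show
\[
\alpha_2(L(G))\le |E|\,\frac{\theta'_0+\theta'_i\theta'_{i-1}}{(\theta'_0-\theta'_i)(\theta'_0-\theta'_{i-1})},
\]
and then take the floor. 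Note that $L(G)$ is $2(k-1)$-regular on $|E|$ vertices with largest eigenvalue $\theta'_0=2(k-1)$.

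To prove this, I would use the Abiad--Coutinho--Fiol polynomial ratio bound for $\alpha_t$ with $t=2$. For a regular graph $\Gamma$ on $N$ vertices and a polynomial $p$ of degree at most $2$, set $W(p)=\max_u (p(A))_{uu}$ and $\lambda(p)=\min_{j\ge1}p(\theta_j)$. Then
\[
\alpha_2(\Gamma)\le N\,\frac{W(p)-\lambda(p)}{p(\theta_0)-\lambda(p)}.
\]
I would rederive this quickly. Apply the weighted ratio (Hoffman) argument to $p(A)$ with the characteristic vector of a distance-$3$ set $S$. The off-diagonal entries of $p(A)$ vanish on $S\times S$, because $A$ and $A^2$ have zero entries between vertices at distance $\ge3$. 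Expanding the characteristic vector in an orthonormal eigenbasis with $\mathbf 1/\sqrt N$ first gives $|S|\,W(p)\ge \chi_S^\top p(A)\chi_S\ge p(\theta_0)|S|^2/N+\lambda(p)(|S|-|S|^2/N)$, which rearranges to the bound.

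Next I would choose $p(x)=(x-\theta'_i)(x-\theta'_{i-1})$, the interlacing-type choice that is optimal in that work. Because $\theta'_i$ and $\theta'_{i-1}$ are consecutive distinct eigenvalues, $p(\theta'_j)\ge0$ for every eigenvalue and vanishes at the two roots, so $\lambda(p)=0$. Then $p(\theta'_0)=(\theta'_0-\theta'_i)(\theta'_0-\theta'_{i-1})$. For the diagonal, regularity gives $(A^2)_{uu}=\theta'_0$ and $A_{uu}=0$, so $W(p)=\theta'_0+\theta'_i\theta'_{i-1}$. Substituting these gives exactly the displayed inequality, and the corollary follows.

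The main obstacle is the sign condition $\lambda(p)=0$. It needs $\theta'_{i-1}>\theta'_i$ to be adjacent in the list with $i\ge1$, which holds because $\theta'_i\le-1<\theta'_0$ and $d'\ge2$. It also needs $p(\theta'_0)>0$, which holds as $\theta'_0$ exceeds both roots (if $i=1$, then $\theta'_{i-1}=\theta'_0$ would make the denominator zero; I would check that $d'\ge2$ together with the choice of $\theta'_i\le -1$ excludes this degenerate case, or treat it separately). The choice of $\theta'_i$ as the largest eigenvalue $\le -1$ only serves to make the bound good; any consecutive pair works for validity.
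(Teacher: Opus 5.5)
The paper does not prove this statement; it quotes it from Abiad--Reijnders. Its form is exactly what your route produces: $\chi_s'(G)\ge |E|/\alpha_2(L(G))$ from the partition into induced matchings, plus the Abiad--Coutinho--Fiol polynomial ratio bound for $\alpha_2$ applied to the $2(k-1)$-regular graph $L(G)$ with $p(x)=(x-\theta'_i)(x-\theta'_{i-1})$. Your argument is correct. The rederivation of the ratio bound is sound: off-diagonal entries of $p(A)$ vanish on a distance-$3$ set, and $p(A)$ is positive semidefinite. The diagonal computation $W(p)=\theta'_0+\theta'_i\theta'_{i-1}$ is also right.

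Two loose ends need tidying.

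First, the floor is justified because $\alpha_2(L(G))$ is an integer, not because $\chi_s'(G)$ is. Integrality of $\alpha_2(L(G))$ gives $\alpha_2(L(G))\le\lfloor B\rfloor$, where $B$ is the displayed expression, and $\lfloor B\rfloor\ge\alpha_2(L(G))\ge 1$, so the division is legitimate. Integrality of $\chi_s'(G)$ would instead let you put a ceiling on the whole right-hand side.

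Second, you should close the degenerate case $i=1$ rather than hedge, since this is exactly what the hypothesis $d'\ge 2$ is for.
\begin{itemize}
\item Suppose every component of the regular graph $L(G)$ were complete. Then all components are copies of the same $K_r$, and $L(G)$ has at most two distinct eigenvalues, contradicting $d'\ge 2$.
\item So some component $C$ is connected and not complete. It therefore contains an induced $P_3$, whose spectrum is $\sqrt2,0,-\sqrt2$.
\item Cauchy interlacing gives $\lambda_2(C)\ge 0$, and Perron--Frobenius gives $\lambda_2(C)<\theta'_0$. Hence $\theta'_1\ge 0>-1$.
\item Therefore $i\ge 2$, so $\theta'_0>\theta'_{i-1}>\theta'_i$ and $p(\theta'_0)>0$.
\item Also, $\theta'_i$ exists at all because $L(G)$ has an edge, so its least eigenvalue is at most $-1$.
\end{itemize}
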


In this section we show two alternative spectral lower bounds on the strong chromatic index. Furthermore, we give some graph classes for which these bounds are tight.

\begin{thm}\label{SCIbound1}
Let $G$ be an $n$-vertex graph with $m$ edges and adjacency eigenvalues $\lambda_1\geq\cdots\geq\lambda_n$. Then
\begin{eqnarray*}
\chi_s'(G)\geq\frac{m}{\min\{|\{i:\lambda_i\geq1\}|,|\{i:\lambda_i\leq-1\}|\}}.
\end{eqnarray*}
\end{thm}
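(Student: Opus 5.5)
A strong edge coloring partitions $E(G)$ into color classes that are induced matchings. So we have $\chi_s'(G)\ge m/\nu_I(G)$, where $\nu_I(G)$ denotes the maximum size of an induced matching in $G$. It therefore suffices to prove
\[
\nu_I(G)\le \min\{|\{i:\lambda_i\geq1\}|,\ |\{i:\lambda_i\leq-1\}|\},
\]
and we will do this by eigenvalue interlacing (Cauchy interlacing for principal submatrices).

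Let $M=\{e_1,\dots,e_t\}$ be an induced matching of size $t=\nu_I(G)$. Let $H$ be the subgraph of $G$ induced by the $2t$ endpoints of these edges. Since $M$ is induced, $H$ is exactly $tK_2$, a disjoint union of $t$ edges. Its adjacency matrix $A_H$ is a principal submatrix of $A_G$, and its spectrum is $1$ with multiplicity $t$ and $-1$ with multiplicity $t$.

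Cauchy interlacing for a principal $2t\times 2t$ submatrix gives $\lambda_i(A_G)\ge \lambda_i(A_H)$ for $1\le i\le 2t$. Taking $i=t$, we get $\lambda_t(A_G)\ge \lambda_t(A_H)=1$, so at least $t$ eigenvalues of $A_G$ are $\ge 1$. Symmetrically, $\lambda_{n-2t+i}(A_G)\le\lambda_i(A_H)$. Taking $i=t+1$, we get $\lambda_{n-t+1}(A_G)\le -1$, so the last $t$ eigenvalues of $A_G$ are $\le -1$. Hence $t\le |\{i:\lambda_i\ge1\}|$ and $t\le|\{i:\lambda_i\le-1\}|$.

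Combining the two bounds gives $\chi_s'(G)\ge m/t\ge m/\min\{\cdot,\cdot\}$. The minimum in the denominator is positive whenever $m\ge1$, since then $t\ge1$. For $m=0$ the statement is degenerate and can be excluded or read as $0\ge 0/0$ by convention.

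There is no real obstacle here. The only points needing care are the observation that an induced matching yields an induced $tK_2$ (this is where inducedness is essential, not just being a matching) and getting the interlacing indices right in both directions.
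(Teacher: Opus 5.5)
Your proof is correct and follows the same route as the paper: partition $E(G)$ into $\chi_s'(G)$ induced matchings and bound the size of each by Cauchy interlacing. You additionally spell out the step the paper leaves implicit: an induced matching of size $t$ induces $tK_2$, and interlacing then gives $\lambda_t(A_G)\ge 1$ and $\lambda_{n-t+1}(A_G)\le -1$. Both indices are correct.
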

\begin{proof}
There exists a partition $E(G)=E_1\cup\cdots\cup E_k$ such that $k=\chi_s'(G)$ and each $E_i$ is an induced matching of $G$. By the Cauchy interlacing theorem, we have
\begin{eqnarray*}
|E_i|\leq \min\{|\{i:\lambda_i\geq1\}|,|\{i:\lambda_i\leq-1\}|\}
\end{eqnarray*}
for $i=1,\ldots,k$. Hence
\begin{eqnarray*}
m=\sum_{i=1}^k|E_i|\leq k\min\{|\{i:\lambda_i\geq1\}|,|\{i:\lambda_i\leq-1\}|\}.
\end{eqnarray*}
\end{proof}

\begin{thm}\label{SCIbound2}
Let $G$ be an $n$-vertex $d$-regular graph with adjacency eigenvalues $\lambda_1\geq\cdots\geq\lambda_n$. Then
\begin{eqnarray*}
\chi_s'(G)\geq\frac{nd}{2\left\lfloor\frac{n(1-\lambda_n)}{2(d-\lambda_n)}\right\rfloor}\geq\frac{d(d-\lambda_n)}{1-\lambda_n}.
\end{eqnarray*}
\end{thm}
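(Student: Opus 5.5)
A strong edge coloring of $G$ partitions $E(G)$ into induced matchings, so the plan is to bound the size of an induced matching, much as in Theorems \ref{ICIbound} and \ref{SCIbound1}. Write $E(G)=E_1\cup\cdots\cup E_k$ with $k=\chi_s'(G)$ and each $E_i$ an induced matching. Let $M$ be any induced matching with $t$ edges, and let $U$ be the set of its $2t$ endpoints. Then the subgraph $G[U]$ is exactly $t$ disjoint copies of $K_2$, i.e.\ an induced $1$-regular subgraph on $n_1=2t$ vertices.

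The next step is to apply Lemma \ref{subgraphbound} with $d_1=1$ and $n_1=2t$. This gives $n-2td\geq (n-2t)\lambda_n$, which rearranges to $2t(d-\lambda_n)\leq n(1-\lambda_n)$. Since $G$ has at least one edge, $\lambda_n<0$, so $d-\lambda_n>0$ and we may divide. Hence
\[
t\leq \frac{n(1-\lambda_n)}{2(d-\lambda_n)},
\]
and because $t$ is an integer, $t\leq \left\lfloor\frac{n(1-\lambda_n)}{2(d-\lambda_n)}\right\rfloor$. Every class $E_i$ obeys this bound. Summing over the $k$ classes and using $m=nd/2$ yields $nd/2\leq k\left\lfloor\frac{n(1-\lambda_n)}{2(d-\lambda_n)}\right\rfloor$, which is the first inequality.

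For the second inequality, drop the floor. Since $\lfloor x\rfloor\leq x$ and the floor is positive (any single edge is an induced matching, so $t=1$ is admissible), we get
\[
\frac{nd}{2\lfloor x\rfloor}\geq \frac{nd}{2x}=\frac{nd\cdot 2(d-\lambda_n)}{2n(1-\lambda_n)}=\frac{d(d-\lambda_n)}{1-\lambda_n}.
\]

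The main point needing care is the application of Lemma \ref{subgraphbound}: checking that an induced matching really gives an induced $1$-regular subgraph, and fixing the sign of $d-\lambda_n$. The degenerate case $d=0$ is excluded implicitly, since $\chi_s'$ and the bound require edges. I would add a remark that we assume $d\geq1$, so that $\lambda_n\leq -1$.
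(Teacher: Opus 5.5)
Your proof is correct and follows essentially the same argument as the paper. An induced matching with $t$ edges is a $1$-regular induced subgraph on $2t$ vertices, Lemma \ref{subgraphbound} gives $t\leq\left\lfloor\frac{n(1-\lambda_n)}{2(d-\lambda_n)}\right\rfloor$, and distributing the $nd/2$ edges over the color classes yields both inequalities; your checks that $d-\lambda_n>0$ and that the floor is at least $1$ are details the paper leaves implicit.
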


\begin{proof}
There exists a partition $E(G)=E_1\cup\cdots\cup E_k$ such that $k=\chi_s'(G)$ and each $E_i$ is an induced matching of $G$. Let $r=\max_{1\leq i\leq k}|E_i|$. Then $k\geq\frac{nd}{2r}$ and $G$ has a $1$-regular induced subgraph with $2r$ vertices. By Lemma \ref{subgraphbound}, we have
\begin{eqnarray*}
n-2rd&\geq&(n-2r)\lambda_n,\\
r&\leq&\left\lfloor\frac{n(1-\lambda_n)}{2(d-\lambda_n)}\right\rfloor.
\end{eqnarray*}
Hence
\begin{eqnarray*}
k\geq\frac{nd}{2r}\geq\frac{nd}{2\left\lfloor\frac{n(1-\lambda_n)}{2(d-\lambda_n)}\right\rfloor}\geq\frac{d(d-\lambda_n)}{1-\lambda_n}.
\end{eqnarray*}
\end{proof}

It is easy to see that for complete graphs and complete bipartite graphs, the bound in Theorem \ref{SCIbound1} is tight. For complete graphs, the bound in Theorem \ref{SCIbound2} is also achieved.

In Table \ref{table:strongchromaticindex-dregular} we compare Conjecture \ref{conjecture} (\cite[Section 1]{FSGT1989}), Theorem \ref{SCIbound1}, Theorem \ref{SCIbound2} and Corollary \ref{AR2026} (\cite[Corollary 9]{AR2026}) for the Sagemath named graphs. The computational results show that our spectral lower bounds on the strong chromatic index can be tight and, in some specific instances, can bring improvements upon the spectral lower bound from \cite[Corollary 9]{AR2026}. 

\section{Applications}\label{sec:apps}
Hypercubes appear throughout theoretical computer science and combinatorics for a variety of reasons, making their structure an essential subject of study. Despite their seemingly simple form, they often give rise to surprisingly complex and challenging problems; we will see two of them in this section. 

\subsection{The code covering number of hypercubes}\label{sec:codecoverignumber}

In coding theory, there is an interest in the problem of determining the injective
chromatic number of the $n$-dimensional hypercube, see e.g. \cite{BB1977,DHLL1990,Mnupublished}. Recall that the \emph{$n$-dimensional cube} $Q_n$ is the graph defined on the vertex set $\{0,1\}$ by $[a,b] \in E(Q_n)$ if and only if $a$ and $b$ differ in exactly one coordinate. A (binary) \emph{code} of length $n$ is an arbitrary subset $S$ of vertices of the $n$-cube $Q_n$. The code $S$ is \emph{single-error-correcting} if the Hamming distance of any two distinct vertices of $S$ is at least $3$. The \emph{code covering number} of the $n$-cube, denoted $\gamma(Q_n)$, is the minimum number $t$ of single-error-correcting codes $S_1, \dots, S_t$ such that $V(Q_n) = S_1\cup \cdots \cup S_t$.

Let us first give a small overview of the some known results on the code covering number of the $n$-cube, obtained by Hahn, Kratochvı\'il,  \v{S}ir\'a\v{n} and Sotteau \cite{01}.

\begin{lem}\cite{01}\label{lem5.1} $\gamma(Q_n)=\chi_i(Q_{n+1})$.
\end{lem}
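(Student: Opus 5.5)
The plan is to prove $\gamma(Q_n)=\chi_i(Q_{n+1})$ by identifying open packings of $Q_{n+1}$ with pairs of single-error-correcting codes of length $n$. We write vertices of $Q_{n+1}$ as $(a,\epsilon)$ with $a\in\{0,1\}^n$ and $\epsilon\in\{0,1\}$. Since $Q_{n+1}$ is bipartite, with parts given by the parity of the weight, two vertices have a common neighbour iff they are at Hamming distance exactly $2$. Hence an open packing of $Q_{n+1}$ is exactly a set in which no two vertices are at distance $2$. Distance-$1$ pairs are allowed, since they lie in opposite parts and cannot share a neighbour. Thus an open packing splits as $S_0\cup S_1$, with $S_0$ inside the even-weight part $E_{n+1}$ and $S_1$ inside the odd-weight part $O_{n+1}$. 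Each $S_j$ is a set of same-parity vectors pairwise at distance $\ge 4$.

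Next I will use the puncturing bijection $\pi:E_{n+1}\to\{0,1\}^n$ that deletes the last coordinate; it is a bijection because the last bit is determined by parity. For $u,v\in E_{n+1}$, the distance $d(u,v)$ is even and $d(\pi u,\pi v)\in\{d(u,v),d(u,v)-1\}$. So $d(u,v)\ge 4$ iff $d(\pi u,\pi v)\ge 3$. Indeed, if $d(\pi u,\pi v)\ge3$ then $d(u,v)\ge3$, and being even it is $\ge4$; conversely $d(u,v)\ge 4$ gives $d(\pi u,\pi v)\ge 3$. This is the classical extended-code correspondence. Therefore subsets of $E_{n+1}$ with pairwise distance $\ge 4$ correspond bijectively under $\pi$ to single-error-correcting codes of length $n$, and the same holds for $O_{n+1}$.

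For $\chi_i(Q_{n+1})\le\gamma(Q_n)$: given codes $C_1,\dots,C_t$ covering $\{0,1\}^n$, set $T_i=\pi_E^{-1}(C_i)\cup\pi_O^{-1}(C_i)$. Each $T_i$ is an open packing by the first paragraph, and the $T_i$ cover $V(Q_{n+1})$. Overlaps can be removed by shrinking sets, since subsets of open packings are open packings. For $\gamma(Q_n)\le\chi_i(Q_{n+1})$: given an injective colouring with classes $T_1,\dots,T_k$, the sets $\pi_E(T_i\cap E_{n+1})$ are single-error-correcting codes covering $\{0,1\}^n$, which gives $\gamma(Q_n)\le k$. (Using only the even half already suffices.)

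The main obstacle is the first step: checking carefully that "no common neighbour" in $Q_{n+1}$ is exactly "distance $\neq 2$" and that adjacent pairs cause no trouble. This uses bipartiteness and the fact that in a hypercube any two vertices at distance $2$ have exactly two common neighbours, while vertices at distance $1$ have none. The parity argument in the puncturing step is routine once this is in place.
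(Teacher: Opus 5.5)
Your argument is correct and complete. The paper gives no proof of this lemma; it is quoted from Hahn, Kratochv\'{\i}l, \v{S}ir\'a\v{n} and Sotteau. Your puncturing argument is essentially the standard one behind that result.

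In the paper's graph language, your two steps say the following. The two-step graph $Q_{n+1}^{(2)}$ has no edges between the even-weight and odd-weight halves. Deleting the last coordinate maps each half isomorphically onto the square $Q_n^2$: a distance-$2$ pair maps to a pair at distance $1$ or $2$, and conversely. Hence $\chi_i(Q_{n+1})=\chi(Q_{n+1}^{(2)})=\chi(Q_n^2)=\gamma(Q_n)$, because the independent sets of $Q_n^2$ are exactly the single-error-correcting codes. Your two explicit inequalities are this isomorphism unpacked.

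One minor point: you do not need that distance-$2$ pairs have \emph{exactly} two common neighbours. It suffices that such a pair has some common neighbour, while distinct vertices at any other distance have none, which follows from the triangle inequality and parity of Hamming distance.
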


\begin{lem}\cite{01} \label{lem5.2}
$\chi_i(Q_n)=n$ if and only if $n=2^r$ for some integer $r$.
\end{lem}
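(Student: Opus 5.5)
The plan is to prove the two directions separately, working throughout in the language of open packings. By the correspondence recalled in Section~\ref{sec:boundopenpackingnumber}, an injective $k$-coloring of $Q_n$ is the same thing as a partition of $V(Q_n)$ into $k$ open packings. The lower bound $\chi_i(Q_n)\geq \Delta = n$ from \cite{01} is immediate, since the $n$ neighbours of any vertex must receive distinct colours. So in each direction we only need to decide whether $n$ colours suffice.

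For the necessity direction, suppose $V(Q_n)=S_1\cup\cdots\cup S_n$ is a partition into open packings. Since $S_j$ is an open packing, every vertex $v$ has at most one neighbour in $S_j$. Double counting the pairs $(v,s)$ with $s\in S_j$ and $vs\in E(Q_n)$ then gives
\[
n|S_j| \leq 2^n .
\]
Summing over $j$ yields $2^n=\sum_j |S_j|\leq n\cdot 2^n/n$, so equality holds throughout and $|S_j|=2^n/n$ for every $j$. In particular $n$ divides $2^n$, which forces $n=2^r$. Equivalently, every vertex has exactly one neighbour in each class, so each $S_j$ is a perfect total code.

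For the sufficiency direction, let $n=2^r$ and write the vertices of $Q_n$ as pairs $(x,b)$ with $x\in\{0,1\}^{n-1}$ and $b\in\{0,1\}$. Let $H$ be the $r\times(n-1)$ parity-check matrix of the binary Hamming code of length $n-1=2^r-1$; its columns $h_1,\ldots,h_{n-1}$ are exactly the nonzero vectors of $\mathbb{F}_2^r$. I would colour $(x,b)$ by its syndrome $s(x)=Hx\in\mathbb{F}_2^r$, which uses $2^r=n$ colours. The neighbours of $(x,b)$ are $(x,1-b)$, with colour $s(x)$, and $(x+e_i,b)$ for $1\leq i\leq n-1$, with colour $s(x)+h_i$. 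Since $\{0,h_1,\ldots,h_{n-1}\}=\mathbb{F}_2^r$, these $n$ colours are pairwise distinct, so the colouring is injective and $\chi_i(Q_n)\leq n$.

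The only step needing care is this construction. The key points are to use the extra coordinate $b$ to absorb the colour $s(x)$ itself, and to use the fact that Hamming columns exhaust the nonzero syndromes. The counting direction is routine.
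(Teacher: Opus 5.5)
The paper gives no proof of this lemma. It is quoted from \cite{01} and used as a black box (e.g.\ in Theorem~\ref{thm:codecoveringspectralbound}(i)), so there is nothing in the paper to match against. Your argument is correct and complete. An open packing meets each open neighbourhood in at most one vertex, so $n|S_j|\le 2^n$, and summing over $j$ forces $|S_j|=2^n/n$, hence $n\mid 2^n$. The syndrome colouring is injective because $\{0,h_1,\dots,h_{n-1}\}=\mathbb{F}_2^r$, with the extra coordinate $b$ supplying the colour $Hx$ itself.

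For comparison, Lemma~\ref{lem5.1} gives $\chi_i(Q_n)=\gamma(Q_{n-1})$: the two-step graph of $Q_n$ splits into its even and odd parts, each isomorphic to $Q_{n-1}^2$. So the lemma is the classical statement that $Q_{n-1}$ splits into $n$ single-error-correcting codes iff a perfect code exists. Your colour classes are exactly (Hamming coset)$\times\{0,1\}$, and your double count is the sphere-packing bound applied to both parity halves at once. You are therefore running the standard coding argument directly in $Q_n$, without the reduction.

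The paper's own tools give a different proof of necessity, in the style of Theorem~\ref{thm5.6}. For even $n$, the matrix $A_{Q_n^{(2)}}=\frac12(A_{Q_n}^2-nI)$ has degree $\binom n2$ and least eigenvalue $-n/2$, so Lemma~\ref{HoffmanChi} gives exactly $\chi_i(Q_n)\ge n$. Hence any $n$-colouring is a Hoffman colouring, and Lemma~\ref{BlokhuisProposition 2.3} forces equal class sizes, i.e.\ $n\mid 2^n$. Odd $n>1$ is excluded by Theorem~\ref{thm5.6}.

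Your count is the elementary version: it handles all $n$ uniformly and also shows that each class is a perfect total code. The spectral version is what extends to the $n+1$ threshold for odd $n$. There your count only gives $|S_j|\le 2^n/n$, while the ratio bound on $Q_n^{(2)}$ gives $2^n/(n+1)$.
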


\begin{lem}\cite{01} \label{lem5.3}
$\chi_i(Q_{2^r-j})=2^r$ for $0\leq j\leq3$.
\end{lem}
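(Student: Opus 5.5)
The statement is Lemma~\ref{lem5.3}: $\chi_i(Q_{2^r-j})=2^r$ for $0\leq j\leq 3$. I would prove it by matching a general lower bound with a construction coming from Hamming codes, and then pushing the construction down to smaller cubes.

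\emph{Lower bound for $j\le 2$.} Since $\chi_i(Q_n)\ge\Delta=n$, the degree bound gives $\chi_i(Q_{2^r-j})\ge 2^r-j$, which is not enough. The better idea is to count via open packings. In $Q_n$, an open packing is exactly a set whose pairwise Hamming distances avoid $2$; within one bipartition class this means a code of minimum distance at least $4$. The two-step graph $Q_n^{(2)}$ is the disjoint union of two copies of the halved cube. Hence
\[
\chi_i(Q_n)=\chi(\tfrac12 Q_n)\ \ge\ \frac{2^{n-1}}{A(n,4)}=\frac{2^{n-1}}{A(n-1,3)}.
\]
For $n=2^r$, $2^r-1$, $2^r-2$, the sphere-packing bound gives $A(n-1,3)\le 2^{n-1}/n$, and for $n-1\neq 2^s-1$ it is strictly below $2^{n-1}/n$. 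Together with integrality, this should yield $\ge 2^r$. I would check this case by case with the Hamming bound. The expected strengthening is that $A(2^r-3,3)\le 2^{2^r-3}/(2^r-2)$ is strict, using that perfect codes exist only at lengths $2^s-1$, so that rounding forces $2^r$.

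\emph{Lower bound for $j=3$.} Here I would instead use monotonicity: $Q_{n}$ is an induced subgraph of $Q_{n+1}$, and an injective coloring of $Q_{n+1}$ does not directly restrict. I would therefore also verify the lower bound through the halved-cube counting at $n=2^r-3$. I expect this to be the main obstacle. The Hamming bound alone gives roughly $2^r-3$, so one needs the sharper known values of $A(2^r-4,3)$, for example the Best/Johnson-type improvement showing that codes of length $2^s-3$ have size at most $2^{n}/(n+3)$ roughly. The step I would try first is the Johnson bound for $A(n,3)$ when $n\equiv 1 \pmod 4$ or similar congruences, to get strict improvement past $2^r-3$.

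\emph{Upper bound.} For the upper bound, I use Lemma~\ref{lem5.2}: $\chi_i(Q_{2^r})=2^r$, realized by cosets of the extended Hamming code, split across bipartition classes. Deleting coordinates gives the rest. I would show $\chi_i(Q_{n-1})\le\chi_i(Q_n)$ by restricting to the subcube with last coordinate $0$ and noting that two vertices there with a common neighbour in $Q_{n-1}$ also share one in $Q_n$. Monotonicity then gives $\chi_i(Q_{2^r-j})\le 2^r$.
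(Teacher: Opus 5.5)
The paper gives no proof of this lemma (it is quoted from \cite{01}), so your argument has to stand on its own. Several parts are correct:
\begin{itemize}
\item the reduction $\chi_i(Q_n)=\chi(\tfrac12 Q_n)\ge 2^{n-1}/A(n-1,3)$;
\item the upper bound from $\chi_i(Q_{2^r})=2^r$ plus restriction to a subcube;
\item the cases $j=0,1$, the latter because $2^{2^r-2}/(2^r-1)$ is not an integer.
\end{itemize}
Restriction of an injective colouring works for any subgraph. So drop your remark that an injective colouring of $Q_{n+1}$ ``does not directly restrict''; monotonicity cannot give lower bounds anyway.

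The gap is in $j=2,3$.

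For $j=2$ you need $A(2^r-3,3)<2^{2^r-3}/(2^r-1)$. Strict sphere packing only gives $A(2^r-3,3)<2^{2^r-3}/(2^r-2)$, i.e.\ $\chi_i(Q_{2^r-2})\ge 2^r-1$. Rounding cannot supply the last unit, because $2^{2^r-3}/\lfloor 2^{2^r-3}/(2^r-2)\rfloor$ exceeds $2^r-2$ by far less than $1$. Concretely, for $r=3$ the Hamming bound gives $A(5,3)\le 5$, hence only $\chi_i(Q_6)\ge\lceil 32/5\rceil=7$.

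For $j=3$ the Johnson bound does not rescue you either. For even $m=2^r-4$ it raises the sphere-packing denominator from $m+1$ only to about $m+2$; for example it gives $A(12,3)\le 292$. That yields at best $\chi_i(Q_{2^r-3})\ge 2^r-1$ after rounding (here $\chi_i(Q_{13})\ge 15$), whereas you need denominator $m+4=2^r$.

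The missing ingredient is the theorem of Best and Brouwer that the triply shortened Hamming code is optimal: $A(2^r-4,3)=2^{2^r-r-4}$ for $r\ge 3$. Its proof uses Delsarte's linear programming bound; it does not follow from sphere-packing or Johnson-type counting in general. Since shortening gives $A(n-1,3)\ge A(n,3)/2$, it also yields $A(2^r-3,3)=2^{2^r-r-3}$. Then
\[
\chi_i(Q_{2^r-j})\ \ge\ \frac{2^{2^r-j-1}}{2^{2^r-j-1-r}}\ =\ 2^r \qquad (0\le j\le 3),
\]
and with your upper bound this proves the lemma. This is also why the paper's own spectral bounds, Corollary~\ref{coro:injectiveboundregular} and Theorem~\ref{thm5.6}, stop at $n$ or $n+1$.

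Finally, the statement as written needs $r\ge 3$ when $j\in\{2,3\}$, since $\chi_i(Q_2)=2$ and $\chi_i(Q_1)=1$.
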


\begin{lem}\cite{01} \label{lem5.4}
$\chi_i(Q_{2n+1})\leq2\chi_i(Q_{n+1})$.
\end{lem}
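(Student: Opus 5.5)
We prove the statement through its coding-theoretic form. By Lemma \ref{lem5.1}, $\chi_i(Q_{n+1})=\gamma(Q_n)$ and $\chi_i(Q_{2n+1})=\gamma(Q_{2n})$. So it suffices to show $\gamma(Q_{2n})\leq 2\gamma(Q_n)$. In other words: if $V(Q_n)=\{0,1\}^n$ can be partitioned into $t$ single-error-correcting codes, then $\{0,1\}^{2n}$ can be partitioned into $2t$ such codes. We may take the codes to be disjoint by discarding repeated vertices, since subsets of single-error-correcting codes remain single-error-correcting. A partition into $t$ codes is the same as a map $c:\{0,1\}^n\to\{1,\ldots,t\}$ such that any two distinct words with the same value are at Hamming distance at least $3$.

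The construction is a $(u\,|\,u+v)$-type recolouring. Write a vertex of $Q_{2n}$ as $(x,y)$ with $x,y\in\{0,1\}^n$, and let $p(x)\in\{0,1\}$ be the parity of the Hamming weight of $x$. Define
\begin{eqnarray*}
C(x,y)=\big(c(x+y),\,p(x)\big),
\end{eqnarray*}
where addition is over $\mathbb{F}_2$. This map uses at most $2t$ colours, and it remains to check that each colour class has minimum distance at least $3$.

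Take distinct $(x,y)$ and $(x',y')$ at distance at most $2$ with $C(x,y)=C(x',y')$. By the triangle inequality,
\begin{eqnarray*}
d(x+y,\,x'+y')\leq d(x,x')+d(y,y')\leq 2.
\end{eqnarray*}
Since $c(x+y)=c(x'+y')$, the property of $c$ forces $x+y=x'+y'$. Hence $e:=x+x'=y+y'$, and the distance between the two vertices is $2|e|\leq 2$. As the vertices are distinct, $|e|=1$. Then $x$ and $x'$ differ in exactly one coordinate, so $p(x)\neq p(x')$, which contradicts equality of the colours. Therefore $C$ is a partition of $V(Q_{2n})$ into $2t$ single-error-correcting codes, and taking $t=\gamma(Q_n)$ gives $\gamma(Q_{2n})\leq 2\gamma(Q_n)$. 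Translating back through Lemma \ref{lem5.1} yields $\chi_i(Q_{2n+1})\leq 2\chi_i(Q_{n+1})$.

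The only delicate point is the choice of the second coordinate $p(x)$. It must separate exactly the pairs with $x+y=x'+y'$ at distance $2$, which the first coordinate cannot distinguish, and the parity of $x$ does precisely this. Everything else is the triangle inequality.
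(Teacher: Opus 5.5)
Your proof is correct and complete. The paper gives no proof of this statement; it is quoted from \cite{01}, so there is no in-paper argument to compare with.

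Your colouring is the Plotkin $(u\,|\,u+v)$ doubling construction. Set $u=x$ and $v=x+y$. Then the colour class $\{(x,y): c(x+y)=b,\ p(x)=a\}$ equals $\{(u\,|\,u+v): p(u)=a,\ c(v)=b\}$. It is built from a parity class of $\{0,1\}^n$, which has minimum distance $2$, and a colour class of $c$, which has minimum distance $3$. Its minimum distance is therefore at least $\min\{2\cdot 2,3\}=3$. Your two cases, $x+y\neq x'+y'$ and $x+y=x'+y'$, are exactly the two cases in the proof of that Plotkin bound.

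The one ingredient you take on trust is Lemma~\ref{lem5.1}, which the paper also only cites. You can make the argument self-contained as follows. Two vertices of $Q_{n+1}$ have a common neighbour iff they are at Hamming distance exactly $2$. So $Q_{n+1}^{(2)}$ splits into its even-weight and odd-weight halves. Deleting the last coordinate maps each half isomorphically onto $Q_n^2$: for two words of equal weight parity, distance exactly $2$ in $\{0,1\}^{n+1}$ is equivalent to distance $1$ or $2$ in the first $n$ coordinates. Since the colour classes of $Q_n^2$ are exactly the single-error-correcting codes, this gives $\chi_i(Q_{n+1})=\chi(Q_n^2)=\gamma(Q_n)$.
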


Thus, for any positive integer $n$, it is known that $\chi_i(Q_n)\geq n$, with equality if and only if $n=2^r$ for some integer $r$ (see Lemma \ref{lem5.2}). For an odd integer $n>1$, we prove a sufficient and necessary conditions for $\chi_i(Q_n)=n+1$. 

\begin{thm}\label{thm5.6}
Let $n>1$ be an odd integer. Then $\chi_i(Q_n)\geq n+1$, with equality if and only if $n=2^r-1$ for some integer $r$.
\end{thm}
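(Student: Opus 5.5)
The plan is to get the lower bound from Corollary~\ref{coro:injectiveboundregular}, to get sufficiency from Lemma~\ref{lem5.3}, and to get necessity by showing that equality forces a Hoffman coloring of the two-step graph and then applying Lemma~\ref{BlokhuisProposition 2.3}.

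\emph{Lower bound.} The cube $Q_n$ is $n$-regular with adjacency eigenvalues $n-2i$ for $i=0,\dots,n$. When $n$ is odd, none of these is $0$, and the eigenvalues of smallest absolute value are $\pm1$, so $\lambda(Q_n)=1$. Corollary~\ref{coro:injectiveboundregular} then gives
$$\chi_i(Q_n)\geq \frac{n^2-1}{n-1}=n+1.$$
(Alternatively, Lemma~\ref{lem5.2} gives $\chi_i(Q_n)\ge n$, with equality only when $n$ is a power of $2$, which an odd $n>1$ is not.)

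\emph{Sufficiency.} If $n=2^r-1$, then Lemma~\ref{lem5.3} with $j=1$ gives $\chi_i(Q_n)=2^r=n+1$ directly.

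\emph{Necessity.} This is the main step. Assume $\chi_i(Q_n)=n+1$ and set $H=Q_n^{(2)}$. Since $Q_n$ is bipartite and triangle-free, two distinct vertices have a common neighbor exactly when they are at Hamming distance $2$. Each such pair has exactly two common neighbors, so
$$A_H=\tfrac12\left(A_{Q_n}^2-nI\right).$$
Hence $H$ is regular of degree $n(n-1)/2$, and its eigenvalues are $\bigl((n-2i)^2-n\bigr)/2$. The largest is $n(n-1)/2$. The smallest is $(1-n)/2$, attained at $n-2i=\pm1$, which is possible because $n$ is odd. Hoffman's bound (Lemma~\ref{HoffmanChi}) gives
$$\chi(H)\ge 1-\frac{n(n-1)/2}{(1-n)/2}=n+1.$$
By \eqref{eq:relationstrongandchromaticnumbers}, $\chi(H)=\chi_i(Q_n)=n+1$, so an optimal injective coloring of $Q_n$ is a Hoffman coloring of the regular graph $H$. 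Lemma~\ref{BlokhuisProposition 2.3} then says all $n+1$ color classes have equal size $2^n/(n+1)$. Therefore $n+1$ divides $2^n$, so $n+1=2^r$ and $n=2^r-1$.

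The one point needing care is the eigenvalue computation for $H$: we must confirm that the minimum eigenvalue is $(1-n)/2$, not something coming from $i=0$ or $i=n$, so that the Hoffman bound for $H$ equals exactly $n+1$ and equality is genuinely a Hoffman coloring. Note also that $H$ is disconnected (it is two copies of the halved cube). This does no harm, because Lemma~\ref{BlokhuisProposition 2.3} only needs regularity.
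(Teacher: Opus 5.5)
Your proposal is correct and is essentially the paper's proof: the identity $A_{Q_n^{(2)}}=\frac12(A_{Q_n}^2-nI)$, Hoffman's bound (Lemma~\ref{HoffmanChi}) giving $n+1$, Lemma~\ref{BlokhuisProposition 2.3} forcing $n+1\mid 2^n$, and Lemma~\ref{lem5.3} for the converse. The only difference is that you first get the lower bound from Corollary~\ref{coro:injectiveboundregular} with $\lambda(Q_n)=1$, which is redundant because your Hoffman computation for $Q_n^{(2)}$ already gives it; your remark that the disconnectedness of $Q_n^{(2)}$ is harmless, since only regularity is needed, is also right.
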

\begin{proof}
Recall that $\chi_i(Q_n)$ equals to the chromatic number $\chi(Q_n^{(2)})$ of the two-step graph $Q_n^{(2)}$. Notice that there are no triangles in $Q_n$, and any two vertices with distance $2$ in $Q_n$ have exactly $2$ common neighbors. So the adjacency matrices of $Q_n$ and $Q_n^{(2)}$ satisfy
\begin{eqnarray*}
A_{Q_n^{(2)}}=\frac{1}{2}(A_{Q_n}^2-nI).
\end{eqnarray*}
Since $n$ is odd, the minimum eigenvalue of $A_{Q_n}^2$ is $1$. Then $Q_n^{(2)}$ is an regular graph with degree $\frac{n^2-n}{2}$, and the minimum adjacency eigenvalue of $Q_n^{(2)}$ is $\frac{1-n}{2}$. By Lemma \ref{HoffmanChi} we have
\begin{eqnarray*}
\chi_i(Q_n)=\chi(Q_n^{(2)})\geq1+\frac{n^2-n}{n-1}=n+1.
\end{eqnarray*}
If $\chi_i(Q_n)=\chi(Q_n^{(2)})=n+1$, then by Lemma \ref{BlokhuisProposition 2.3}, we know that $Q_n^{(2)}$ has a proper vertex coloring such that all color classes have equal size. In this case, $2^n$ is divisible by $n+1$, i.e., $n=2^r-1$ for some integer $r$. Conversely, if $n=2^r-1$ for some integer $r$, then by Lemma \ref{lem5.3}, we have $\chi_i(Q_n)=n+1$.
\end{proof}

\begin{thm}\label{thm5.7}
Suppose that $n=2^r-j$ for $r\geq3$ and $3\leq j\leq4$. Then
\begin{eqnarray*}
2n+3\leq\chi_i(Q_{2n+1})\leq 2n+2j.
\end{eqnarray*}
\end{thm}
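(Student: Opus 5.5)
Both inequalities come from results already available in the paper: the upper bound from Lemmas \ref{lem5.4} and \ref{lem5.3}, the lower bound from Theorem \ref{thm5.6}.

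\textbf{Upper bound.} By Lemma \ref{lem5.4}, $\chi_i(Q_{2n+1})\leq 2\chi_i(Q_{n+1})$. Here $n+1=2^r-(j-1)$ with $j-1\in\{2,3\}$, so Lemma \ref{lem5.3} applies and gives $\chi_i(Q_{n+1})=2^r$. Since $2^r=n+j$, we get $\chi_i(Q_{2n+1})\leq 2(n+j)=2n+2j$.

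\textbf{Lower bound.} The integer $2n+1$ is odd and greater than $1$. Theorem \ref{thm5.6} therefore gives $\chi_i(Q_{2n+1})\geq 2n+2$, with equality if and only if $2n+1=2^s-1$ for some integer $s$, that is, $n=2^{s-1}-1$.

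We show this equality case cannot occur. Suppose $n=2^r-j=2^{s-1}-1$, so $2^r-2^{s-1}=j-1\in\{2,3\}$.
\begin{itemize}
\item If $j=4$, the difference is $3$, which is odd. Since $r\geq 3$ makes $2^r$ even, this forces $2^{s-1}=1$, so $2^r=4$, contradicting $r\geq3$.
\item If $j=3$, the difference is $2$, which forces $\{2^r,2^{s-1}\}=\{4,2\}$, so $r=2$, again contradicting $r\geq 3$.
\end{itemize}
So $\chi_i(Q_{2n+1})\neq 2n+2$. Since $\chi_i$ is an integer, $\chi_i(Q_{2n+1})\geq 2n+3$.

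The only step needing care is excluding the equality case of Theorem \ref{thm5.6}, which is the short parity argument above.
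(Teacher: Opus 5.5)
Your proof is correct and follows the paper's argument exactly. The upper bound comes from Lemma \ref{lem5.4} combined with Lemma \ref{lem5.3} applied to $n+1=2^r-(j-1)$, and the lower bound comes from Theorem \ref{thm5.6} with its equality case excluded; the paper only notes $2n+1=2^{r+1}-(2j-1)$ and leaves implicit why this is never of the form $2^s-1$, so your parity argument usefully fills in that step.
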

\begin{proof}
By Lemma \ref{lem5.4} and Lemma \ref{lem5.3}, we have
\begin{eqnarray*}
\chi_i(Q_{2n+1})\leq2\chi_i(Q_{n+1})=2^{r+1}=2n+2j.
\end{eqnarray*}
Since $2n+1=2^{r+1}-(2j-1)$, using Theorem \ref{thm5.6} we obtain
\begin{eqnarray*}
\chi_i(Q_{2n+1})\geq2n+3.
\end{eqnarray*}
\end{proof}

Now we are ready to show the following result concerning the code covering number of the $n$-cube $\gamma(Q_n)$, which provides sufficient and necessary conditions for $\gamma(Q_n)=n+2$, and new lower bounds for the case of $n=2^r-j$ ($r\geq 3$, $3\leq j\leq 4$).

\begin{thm}\label{thm:codecoveringspectralbound}
Let $Q_n$ denote the $n$-dimensional cube.
\begin{description}
    \item[$(i)$] If $n$ is odd, then
$\gamma(Q_n)\geq n+1$, with equality if and only if $n=2^r-1$ for some integer $r$.
 \item[$(ii)$] If $n$ is even, then
$\gamma(Q_n)\geq n+2$, with equality if and only if $n=2^r-2$ for some integer $r$.
\item[$(iii)$] If $n=2^r-j$ for $r\geq3$ and $3\leq j\leq4$, then
\begin{eqnarray*}
2n+3\leq\gamma(Q_{2n})\leq 2n+2j.
\end{eqnarray*}
\end{description}
\end{thm}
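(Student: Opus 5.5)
The plan is to reduce every part to the injective chromatic number of a hypercube one dimension higher, using Lemma \ref{lem5.1}, $\gamma(Q_n)=\chi_i(Q_{n+1})$. After that substitution, each of (i)--(iii) should follow directly from a result already available: Lemma \ref{lem5.2}, Theorem \ref{thm5.6} and Theorem \ref{thm5.7}, respectively.

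For $(i)$, let $n$ be odd, so $m=n+1$ is even. The paper notes, after Lemma \ref{lem5.2}, that $\chi_i(Q_m)\geq m$ for every positive integer $m$, with equality if and only if $m=2^r$. Applying this with $m=n+1$ gives
\[
\gamma(Q_n)=\chi_i(Q_{n+1})\geq n+1,
\]
with equality exactly when $n+1=2^r$, that is, when $n=2^r-1$.

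For $(ii)$, let $n$ be even with $n\geq 2$, so $n+1>1$ is odd. Theorem \ref{thm5.6}, applied to the odd integer $n+1$, gives
\[
\gamma(Q_n)=\chi_i(Q_{n+1})\geq (n+1)+1=n+2,
\]
with equality if and only if $n+1=2^r-1$, that is, $n=2^r-2$.

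For $(iii)$, Lemma \ref{lem5.1} with $2n$ in place of $n$ gives $\gamma(Q_{2n})=\chi_i(Q_{2n+1})$. For $n=2^r-j$ with $r\geq 3$ and $3\leq j\leq 4$, Theorem \ref{thm5.7} gives
\[
2n+3\leq \chi_i(Q_{2n+1})\leq 2n+2j,
\]
which is precisely the claim.

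No step is a real obstacle; the only care needed is bookkeeping at the boundary. Theorem \ref{thm5.6} requires its odd argument to exceed $1$, so in $(ii)$ I would state that $n\geq 2$ (the case $n=0$ is degenerate). I would also check that the index shift $n\mapsto n+1$ correctly turns the condition $n+1=2^r$ into $n=2^r-1$ in $(i)$, and $n+1=2^r-1$ into $n=2^r-2$ in $(ii)$.
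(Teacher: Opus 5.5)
Your proof is correct and has the same skeleton as the paper's: shift to $\chi_i(Q_{n+1})$ via Lemma \ref{lem5.1}, read off the equality cases from Lemma \ref{lem5.2} and Theorem \ref{thm5.6}, and get (iii) directly from Theorem \ref{thm5.7}. The only difference is where the inequalities in (i) and (ii) come from. The paper obtains them from its spectral Corollary \ref{coro:injectiveboundregular} with $\lambda(Q_{n+1})=0$ resp.\ $1$, whereas you take them from the degree bound $\chi_i\geq\Delta$ and from the Hoffman-based inequality already contained in Theorem \ref{thm5.6}; this is equally valid and slightly more economical, and your $n\geq 2$ caveat in (ii) is warranted, since the spectral bound is also undefined for $Q_1$.
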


\begin{proof}
For a graph $G$, let $\lambda(G)$ denote the minimum absolute value among all eigenvalues of $A_{G}$.
\leavevmode
\begin{description}
    \item[$(i)$] If $n$ is odd, then $\lambda(Q_{n+1})=0$. By Lemma \ref{lem5.1} and Corollary \ref{coro:injectiveboundregular}, we obtain
    \begin{eqnarray*}
    \gamma(Q_n)=\chi_i(Q_{n+1})\geq \frac{(n+1)^2-0}{(n+1)-0}=n+1.
    \end{eqnarray*}
    Lemma \ref{lem5.2} implies that the equality holds if and only if $n+1=2^r$ for some integer $r$.
    \item[$(ii)$] If $n$ is even, then $\lambda(Q_{n+1})=1$. By Lemma \ref{lem5.1} and Corollary \ref{coro:injectiveboundregular}, we get  
     \begin{eqnarray*}
    \gamma(Q_n)=\chi_i(Q_{n+1})\geq \frac{(n+1)^2-1}{(n+1)-1}=n+2. 
    \end{eqnarray*}
    Theorem \ref{thm5.6} implies that the equality holds if and only if $n+1=2^r-1$ for some integer $r$.
    \item[$(iii)$] According to Lemma \ref{lem5.1} and Theorem \ref{thm5.7}, we have
    \begin{eqnarray*}
    2n+3\leq \gamma(Q_{2n})=\chi_i(Q_{2n+1})\leq 2n+2j.
    \end{eqnarray*}
\end{description}
\end{proof}

\subsection{The open packing number of bipartite prisms}\label{sec:bipartiteprisms}

Recall that, given two graphs $G$ and $H$, their \emph{Cartesian product} $G\Box H$ is the graph with vertex set $V(G)\times V(H)$, and two vertices $(u_1, v_1)$ and $(u_2,v_2)$ are adjacent in $G\Box H$ if and only if $\{u_1,u_2\}\in E(G),v_1=v_2$ or $u_1=u_2,\{v_1,v_2\}\in E(H)$. The \emph{square power} of a graph $G$, is the graph with the same vertex set as $G$, and where two edges are adjacent if they are at distance at most $2$ in $G$. Let $\alpha_2(G)$ denote the independence number of the square power of $G$.

In this section we consider a generalization of hypercube graphs. The \emph{bipartite prism} of a bipartite graph $G$ is defined as $G \Box K_2$, that is, the Cartesian product of the bipartite graph $G$ with the complete graph on 2 vertices. Since $Q_n=Q_{n-1}\Box K_2$, the hypercube $Q_n$ is a special bipartite prism.

The goal is to investigate the open packing number of  bipartite prisms, with a particular focus on hypercubes. In this direction, Bre\v{s}ar et al. \cite{BKR2024} showed the following results.

\begin{thm}\cite[Theorem 2.2]{BKR2024}\label{thm:rho0alpha2}
If $G$ is a bipartite graph, then $\rho^o(G\Box K_2)=2\alpha_2(G)$.
\end{thm}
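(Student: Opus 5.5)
We show the two inequalities $\rho^o(G\Box K_2)\ge 2\alpha_2(G)$ and $\rho^o(G\Box K_2)\le 2\alpha_2(G)$ separately. Write $V(G\Box K_2)=V(G)\times\{0,1\}$, and use the bipartition $V(G)=X\cup Y$.

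The first step is to record the neighbourhoods. In $G\Box K_2$ we have
\[
N((u,a))=\{(w,a): w\in N_G(u)\}\cup\{(u,1-a)\}.
\]
Two vertices $(u,a)$ and $(v,b)$ therefore share a neighbour in exactly three situations:
\begin{itemize}
\item $a=b$ and $N_G(u)\cap N_G(v)\neq\emptyset$;
\item $a\neq b$ and $uv\in E(G)$, with common neighbour $(u,b)$ or $(v,a)$;
\item $u=v$ and $a=b$, which is the same vertex.
\end{itemize}
When $a\neq b$ and $u=v$, a common neighbour $(w,c)$ would need $w\in N_G(u)$ in layer $a$ and in layer $b$ at once, which is impossible. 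So $(u,0)$ and $(u,1)$ never conflict.

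For the lower bound, take a maximum independent set $I$ of $G^2$, so $|I|=\alpha_2(G)$. Consider the set
\[
S=\bigl((I\cap X)\times\{0,1\}\bigr)\cup\bigl((I\cap Y)\times\{0,1\}\bigr)=I\times\{0,1\}.
\]
We check each type of pair in $S$:
\begin{itemize}
\item Pairs in the same layer are distinct vertices of $I$. They are at distance at least $3$ in $G$, so they have no common neighbour.
\item Pairs $(u,0),(v,1)$ with $u\neq v$ in $I$ are nonadjacent in $G$, so they have no common neighbour.
\item Pairs $(u,0),(u,1)$ never conflict, by the first step.
\end{itemize}
So $S$ is an open packing of size $2\alpha_2(G)$.

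For the upper bound, let $S$ be an open packing and set $S_a=\{u:(u,a)\in S\}$. Each $S_a$ is an open packing of $G$, meaning its members have pairwise disjoint neighbourhoods. In addition, no edge of $G$ joins $S_0$ to $S_1$. The task is to produce a set independent in $G^2$ of size at least $|S|/2$.

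The obstacle is that $S_a$ may contain adjacent pairs, since adjacent vertices in a bipartite graph have no common neighbour. Bipartiteness is what resolves this. Within $S_a$, the adjacency graph has maximum degree $1$: if $u$ had two neighbours $v,w\in S_a$, then $v$ and $w$ would share the neighbour $u$. So $S_a$ is a matching plus isolated vertices.

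The natural candidate is
\[
T=(S_0\cap X)\cup(S_1\cap Y),
\]
together with the complementary candidate $T'=(S_1\cap X)\cup(S_0\cap Y)$. We check that $T$ is independent in $G^2$:
\begin{itemize}
\item Two vertices of $S_0\cap X$ lie on the same side, so they are nonadjacent; they have no common neighbour because $S_0$ is an open packing. Hence they are at distance at least $3$. The same holds for $S_1\cap Y$.
\item For $x\in S_0\cap X$ and $y\in S_1\cap Y$, the two are nonadjacent because there are no $S_0$–$S_1$ edges. They cannot have a common neighbour either, since they lie on opposite sides of a bipartite graph and their distance is odd.
\end{itemize}
So $T$ is independent in $G^2$, and by symmetry so is $T'$. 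Since $|T|+|T'|=|S_0|+|S_1|=|S|$, one of them has size at least $|S|/2$. This gives $\rho^o(G\Box K_2)\le 2\alpha_2(G)$.

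The step I expect to need the most care is verifying that cross-side pairs in $T$ are at distance at least $3$. This uses both the parity argument from bipartiteness and the absence of edges between $S_0$ and $S_1$, which came from the layer-crossing condition in the first step.
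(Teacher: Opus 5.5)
Your proof is correct and complete. The paper gives no proof to compare against: it quotes this result from Bre\v{s}ar et al.\ \cite{BKR2024} and uses it only as a black box in Section~\ref{sec:bipartiteprisms}, to turn the bound on $\alpha_2(G)$ from Proposition~\ref{propo:openpackingbipartiteprisms} into a bound on $\rho^o(G\Box K_2)$. Your argument therefore supplies a self-contained proof. Each piece checks out:
\begin{itemize}
\item your case analysis of when two vertices of $G\Box K_2$ share a neighbour;
\item the lower bound via $I\times\{0,1\}$, which, as your argument shows, holds for every graph $G$ with no bipartiteness needed;
\item the upper bound via $T$ and $T'$.
\end{itemize}

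The same idea can be packaged more cleanly. Since $G$ is bipartite, $G\Box K_2$ is bipartite with parts $P=(X\times\{0\})\cup(Y\times\{1\})$ and $Q=(X\times\{1\})\cup(Y\times\{0\})$. So the two-step graph $(G\Box K_2)^{(2)}$ has no edges between $P$ and $Q$. Projecting to the first coordinate is an isomorphism from the two-step graph on $P$ (and likewise on $Q$) onto $G^2$, by exactly your case analysis. Hence $(G\Box K_2)^{(2)}\cong G^2\sqcup G^2$, and $\rho^o(G\Box K_2)=\alpha\bigl((G\Box K_2)^{(2)}\bigr)=2\alpha_2(G)$ follows in one stroke.

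Your sets $T$ and $T'$ are precisely the projections of $S\cap P$ and $S\cap Q$. From this viewpoint each satisfies $|T|,|T'|\le\alpha_2(G)$ directly, so the averaging step is not needed. Finally, your remark that each $S_a$ induces a matching plus isolated vertices is true but never used: taking only $S_0\cap X$ and $S_1\cap Y$ already removes every adjacency, so you can drop it.
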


\begin{thm}\cite[Theorem 3.2]{BKR2024}
If $n \geq 2$, then $\rho^o(Q_n)\geq2^{n-\lfloor\log(n-1)\rfloor-1}$.
\end{thm}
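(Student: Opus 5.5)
By Theorem \ref{thm:rho0alpha2}, applied with the bipartite graph $G=Q_{n-1}$ and using $Q_n=Q_{n-1}\Box K_2$, we have $\rho^o(Q_n)=2\alpha_2(Q_{n-1})$. Set $m=n-1\geq 1$. The claimed inequality then becomes
\begin{eqnarray*}
\alpha_2(Q_m)\geq 2^{m-\lfloor\log m\rfloor-1},
\end{eqnarray*}
where $\log$ is base $2$. The plan is to exhibit an explicit linear code of this size.

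First I note what an independent set in $Q_m^2$ is. It is a set of binary words of length $m$ in which any two distinct words are at Hamming distance at least $3$, that is, a single-error-correcting code. So it suffices to construct a binary linear code of length $m$, dimension at least $m-\lfloor\log m\rfloor-1$, and minimum distance at least $3$.

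Let $s=\lfloor\log m\rfloor$, so that $m\leq 2^{s+1}-1$. There are $2^{s+1}-1$ nonzero vectors in $\mathbb{F}_2^{s+1}$, so we can choose $m$ distinct ones. Let $H$ be the $(s+1)\times m$ matrix over $\mathbb{F}_2$ with these vectors as columns; this is a shortened Hamming parity-check matrix. Let $C=\ker H\subseteq\mathbb{F}_2^m$. By rank--nullity, $\dim C\geq m-(s+1)$, so $|C|\geq 2^{m-s-1}$.

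Next I check the minimum distance. Suppose a nonzero codeword of $C$ had weight $1$. Then some column of $H$ would be zero, which is impossible. Suppose instead it had weight $2$. Then two columns of $H$ would sum to zero, hence be equal, which is also impossible. Since $C$ is linear, its minimum distance equals its minimum nonzero weight, so the minimum distance is at least $3$.

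Therefore $C$ is an independent set in $Q_m^2$, and $\alpha_2(Q_m)\geq 2^{m-\lfloor\log m\rfloor-1}$. Multiplying by $2$ gives
\begin{eqnarray*}
\rho^o(Q_n)=2\alpha_2(Q_{n-1})\geq 2^{n-1-\lfloor\log(n-1)\rfloor}=2^{n-\lfloor\log(n-1)\rfloor-1}.
\end{eqnarray*}

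There is no serious obstacle here. The only points needing care are the index shift from $n$ to $m=n-1$ and confirming that the counting condition $m\leq 2^{s+1}-1$ holds for every $m\geq 1$; both are immediate.
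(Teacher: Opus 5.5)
Your proof is correct. The reduction $\rho^o(Q_n)=2\alpha_2(Q_{n-1})$ via Theorem~\ref{thm:rho0alpha2} is valid because $Q_{n-1}$ is bipartite for $n\geq 2$. Independent sets in $Q_m^2$ are exactly binary codes of minimum Hamming distance at least $3$. Your shortened Hamming code $\ker H$, with $H$ an $(s+1)\times m$ matrix of distinct nonzero columns and $s=\lfloor\log m\rfloor$, then gives $\alpha_2(Q_m)\geq 2^{m-s-1}$. This is exactly the exponent $n-\lfloor\log(n-1)\rfloor-1$, and the degenerate cases $m=1,2$ (where $C=\{0\}$) are covered as well.

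The paper does not prove this statement at all; it only quotes it from \cite{BKR2024}, so there is no in-paper argument to compare with. Your route is the standard coding-theoretic one. It is self-contained apart from Theorem~\ref{thm:rho0alpha2}, which the paper likewise only cites.

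Reading $\log$ as $\log_2$ is the right choice. With the natural logarithm the bound would give $\rho^o(Q_9)\geq 64$, contradicting the upper bound $\rho^o(Q_9)\leq 60$ quoted in the paper.

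For $n=2^r$, your code is the perfect Hamming code and the bound equals $2^n/n$. So it matches the upper bound of Corollary~\ref{coro:Brimkovetal} and is tight there.
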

A graph is said to be \emph{perfect injectively colorable} if it has an injective coloring in which every color class forms an open packing of largest cardinality. It is known that the $n$-dimensional hypercubes $Q_n$, for $n\leq 8$, are perfect injectively colorable graphs \cite[Section 4]{BKR2024}. So $Q_9$ is the first instance of a hypercube, which is not in this class of graphs, for which bounds have been derived in the literature. In particular,  Bre\v{s}ar et al. (see \cite[Table 1]{BKR2024}) showed that
\begin{eqnarray*}
34\leq\rho^o(Q_9)=2\alpha_2(Q_8)\leq60.
\end{eqnarray*}

The following lemma can be directly derived from \cite[Theorem 3.2]{Abiad-Zhou}, and will be used to show our last main result.

\begin{lem}\label{alpha2}\cite{Abiad-Zhou}
For a graph $G$, let $\rho>0$ be the maximum eigenvalue of $A_G$ with a positive unit eigenvector $y$. Let $p(x)=x^2+c_1x+c_0$ be a polynomial such that $p(\rho)>0$ and $p(A_G)$ is positive semidefinite. Then
\begin{eqnarray*}
\alpha_2(G)\leq\frac{1}{p(\rho)}\max_{u\in V(G)}\frac{(p(A_G))_{uu}}{y_u^2}.
\end{eqnarray*}
\end{lem}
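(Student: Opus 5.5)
The plan is to mimic the Cauchy--Schwarz argument used in the proof of Theorem \ref{thm:evopenpackingnumberbound}, with the positive semidefinite matrix $p(A_G)$ in place of $AA^\top$. The key structural fact is that $p(A_G)=A_G^2+c_1A_G+c_0I$ vanishes off the diagonal on any independent set of the square power of $G$. Indeed, if $S$ is independent in $G^2$, then any two distinct $u,v\in S$ are at distance at least $3$. Hence $(A_G)_{uv}=0$ because they are nonadjacent, and $(A_G^2)_{uv}=0$ because they have no common neighbour. So $(p(A_G))_{uv}=0$ for all distinct $u,v\in S$.

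Next we use the spectral information. Since $A_Gy=\rho y$, the vector $y$ is an eigenvector of $p(A_G)$ with eigenvalue $p(\rho)>0$. Fix a maximum independent set $S$ of $G^2$, and define $z$ by $z_u=y_u^{-1}$ for $u\in S$ and $z_u=0$ otherwise; this uses that $y$ is positive. Then $z^\top y=|S|$, and therefore
\begin{eqnarray*}
p(\rho)\,|S| = z^\top p(A_G)\, y .
\end{eqnarray*}

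Because $p(A_G)$ is positive semidefinite, the bilinear form $(a,b)\mapsto a^\top p(A_G) b$ satisfies Cauchy--Schwarz. This gives
\begin{eqnarray*}
p(\rho)^2|S|^2\leq \big(z^\top p(A_G) z\big)\big(y^\top p(A_G) y\big)=p(\rho)\sum_{u\in S}\frac{(p(A_G))_{uu}}{y_u^2}.
\end{eqnarray*}
Here the vanishing of the off-diagonal entries on $S$ is used to evaluate $z^\top p(A_G)z$, and $\|y\|=1$ gives $y^\top p(A_G)y=p(\rho)$.

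Finally, we bound the sum by $|S|\max_{u}(p(A_G))_{uu}/y_u^2$ and divide by $p(\rho)|S|>0$. This yields $\alpha_2(G)=|S|\leq p(\rho)^{-1}\max_u (p(A_G))_{uu}/y_u^2$. The only delicate point is the use of Cauchy--Schwarz for a possibly singular positive semidefinite form. This is handled by writing $p(A_G)=BB^\top$ and applying the ordinary Cauchy--Schwarz inequality to $B^\top z$ and $B^\top y$. I do not expect a genuine obstacle; the main thing to check is that both hypotheses, $p(\rho)>0$ and positive semidefiniteness, are used exactly where needed.
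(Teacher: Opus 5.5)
Your proof is correct: the vanishing of $(p(A_G))_{uv}$ for distinct $u,v\in S$, the identity $z^\top p(A_G)y=p(\rho)|S|$, and the Cauchy--Schwarz step via $p(A_G)=BB^\top$ are all sound, and you use each of the hypotheses $p(\rho)>0$ and positive semidefiniteness exactly where it is needed. The paper gives no proof of its own here (it quotes the lemma as a direct consequence of Theorem 3.2 of Abiad and Zhou), but your argument is the same weighted Cauchy--Schwarz argument the paper uses for Theorem~\ref{thm:evopenpackingnumberbound}, with $B^\top$ in place of $A^\top$ and the test vector supported on a maximum independent set of $G^2$, so it is essentially the paper's approach and makes the lemma self-contained.
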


Let $d_u$ denote the degree of a vertex $u$. We first show the following spectral upper bound for the open packing number of general bipartite prisms.
\begin{pro}\label{propo:openpackingbipartiteprisms}
Let $G$ be a bipartite graph, and let $\rho>0$ be the maximum eigenvalue of $A_G$ with a positive unit eigenvector $y$. Then
\begin{eqnarray*}
\rho^o(G\Box K_2)=2\alpha_2(G)\leq\frac{2}{\rho(\rho+\lambda)}\max_{u\in V(G)}\frac{d_u}{y_u^2},
\end{eqnarray*}
where $\lambda$ is the minimum positive eigenvalue of $A_G$. Moreover, if $G$ is $d$-regular, then
\begin{eqnarray*}
\rho^o(G\Box K_2)=2\alpha_2(G)\leq\frac{2|V(G)|}{(d+\lambda)}.
\end{eqnarray*}
\end{pro}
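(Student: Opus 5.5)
The plan is to combine Theorem~\ref{thm:rho0alpha2}, which gives $\rho^o(G\Box K_2)=2\alpha_2(G)$ for bipartite $G$, with Lemma~\ref{alpha2} applied to the quadratic
\[
p(x)=x^2+\lambda x = x(x+\lambda).
\]
Since $(A_G^2)_{uu}=d_u$ and $(A_G)_{uu}=0$, we have $(p(A_G))_{uu}=d_u$ for every $u$. Also $p(\rho)=\rho(\rho+\lambda)>0$. Lemma~\ref{alpha2} then yields
\[
\alpha_2(G)\le \frac{1}{\rho(\rho+\lambda)}\max_{u}\frac{d_u}{y_u^2},
\]
and multiplying by $2$ gives the first inequality.

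The only step with real content is checking that $p(A_G)$ is positive semidefinite. Its eigenvalues are $p(\theta)=\theta(\theta+\lambda)$ as $\theta$ runs over the spectrum of $A_G$.
\begin{itemize}
\item If $\theta\ge \lambda>0$, or $\theta=0$, then $p(\theta)\ge 0$ immediately.
\item If $\theta<0$, bipartiteness is needed. The spectrum of a bipartite graph is symmetric about $0$, so $-\theta$ is a positive eigenvalue. Since $\lambda$ is the minimum positive eigenvalue, $-\theta\ge\lambda$, i.e.\ $\theta\le-\lambda$. Then both $\theta$ and $\theta+\lambda$ are $\le 0$, and their product is nonnegative.
\end{itemize}
Hence $p(A_G)\succeq 0$. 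By the same symmetry, no eigenvalue lies strictly between $-\lambda$ and $\lambda$ other than $0$; this is exactly why the minimum positive eigenvalue is the right choice of the linear coefficient. A minor point: if $G$ has an edge, a positive eigenvalue exists, so $\lambda$ is well defined. The existence of a positive unit eigenvector $y$ is part of the hypothesis, and it tacitly requires connectivity.

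For the regular case, we have $\rho=d$, $y=\frac{1}{\sqrt{n}}\mathbf{1}$ with $n=|V(G)|$, and $d_u=d$. Therefore $\max_u d_u/y_u^2=dn$, and the bound becomes
\[
\frac{2dn}{d(d+\lambda)}=\frac{2|V(G)|}{d+\lambda}.
\]

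I expect the positive semidefiniteness check to be the main (though modest) obstacle. The rest is direct substitution into the earlier results.
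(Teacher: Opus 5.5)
Your proposal is correct and is essentially the paper's proof: you use the same polynomial $p(x)=x^2+\lambda x$ in Lemma~\ref{alpha2}, with $p(A_G)\succeq 0$ coming from the symmetry of the bipartite spectrum. You spell this out more carefully than the paper, which only notes that $-\lambda$ is the largest negative eigenvalue, and you obtain the regular case by substituting $\rho=d$ and $y=\mathbf{1}/\sqrt{n}$. As a minor aside, a positive unit eigenvector for $\rho$ does not actually require connectivity (it exists whenever all components share the spectral radius $\rho$, e.g.\ any regular graph), but this remark plays no role in your argument.
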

\begin{proof}
Since $G$ be a bipartite, $-\lambda$ is the maximum negative eigenvalue of $A_G$. Let $p(x)=x^2+\lambda x$. Then $p(\rho)>0$ and $p(A_G)$ is positive semidefinite. By Lemma \ref{alpha2}, we have
\begin{eqnarray*}
\alpha_2(G)\leq\frac{1}{\rho(\rho+\lambda)}\max_{u\in V(G)}\frac{d_u}{y_u^2}.
\end{eqnarray*}
\end{proof}
From Proposition \ref{propo:openpackingbipartiteprisms}, we obtain the following corollary.
\begin{cor}\label{Cor6.4}
$\alpha_2(Q_n)\leq \Big(n+\frac{3+(-1)^n}{2}\Big)^{-1}2^n$.
\end{cor}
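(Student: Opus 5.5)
The plan is to apply the regular case of Proposition \ref{propo:openpackingbipartiteprisms} with $G=Q_n$, which is $n$-regular, bipartite, and has $|V(Q_n)|=2^n$. That proposition gives $2\alpha_2(Q_n)\le 2\cdot 2^n/(n+\lambda)$, so $\alpha_2(Q_n)\le 2^n/(n+\lambda)$, where $\lambda$ is the minimum positive adjacency eigenvalue of $Q_n$. The whole task then reduces to identifying $\lambda$ and checking that $n+\lambda=n+\frac{3+(-1)^n}{2}$.

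The adjacency eigenvalues of $Q_n$ are $n-2i$ for $i=0,\ldots,n$, with multiplicities $\binom{n}{i}$. The smallest positive value among these depends on parity. If $n$ is even, the values $n-2i$ are all even, so the smallest positive one is $2$, attained at $i=n/2-1$, which is valid for $n\ge 2$. If $n$ is odd, the values are all odd, so the smallest positive one is $1$, attained at $i=(n-1)/2$.

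Since $\frac{3+(-1)^n}{2}$ equals $2$ for even $n$ and $1$ for odd $n$, it matches $\lambda$ in both cases, and the corollary follows by substitution.

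For the regular case itself, if one prefers not to rely on it directly, it follows from the general bound. The Perron vector is $y=2^{-n/2}\mathbf{1}$, so $y_u^2=2^{-n}$, $d_u=n$ and $\rho=n$. The general bound then gives $\frac{2}{n(n+\lambda)}\cdot n\,2^n$, which simplifies to $2\cdot 2^n/(n+\lambda)$, consistent with the above. The only care needed is the parity case split, together with excluding the degenerate case $n$ even with $n=0$, where no positive eigenvalue exists; for $n\ge1$ everything is fine.
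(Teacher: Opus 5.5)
Your proposal is correct and is exactly the paper's route: the paper obtains the corollary by specializing the regular case of Proposition \ref{propo:openpackingbipartiteprisms} to $G=Q_n$ (with $d=n$, $|V(Q_n)|=2^n$), where the minimum positive eigenvalue is $\lambda=2$ or $1$ according to the parity of $n$, i.e.\ $\lambda=\frac{3+(-1)^n}{2}$. Your observation that $n\ge 1$ is needed (the formula fails for $Q_0$) is a correct detail that the paper leaves implicit.
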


In \cite[Table 1]{BKR2024}, Bre\v{s}ar et al. proved that
\begin{eqnarray*}
34\leq\rho^o(Q_9)=2\alpha_2(Q_8)\leq60.
\end{eqnarray*}

Using Corollary \ref{Cor6.4} we obtain 
$$\alpha_2(Q_8)\leq\lfloor(8+2)^{-1}2^8\rfloor=25.$$ This improves the known upper bounds by Bre\v{s}ar et al. \cite{BKR2024} to $\alpha_2(Q_8)\leq25$ and $\rho^o(Q_9)\leq50$.

Furthermore, \cite[Corollary 3.12]{BCG2025} gives an upper bound for the open packing number of hypercubes:
\begin{cor}\cite[Corollary 3.12]{BCG2025}\label{coro:Brimkovetal}
$\rho^o(Q_n)\leq \frac{2^n}{n}.$
\end{cor}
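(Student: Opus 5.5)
The plan is to obtain this bound as a special case of Corollary \ref{cor:openpacking}, which is the adjacency-matrix case of Theorem \ref{thm:evopenpackingnumberbound}. That corollary requires only the largest adjacency eigenvalue of $Q_n$ together with a positive unit eigenvector.

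First, $Q_n$ is $n$-regular on $2^n$ vertices. Hence its largest adjacency eigenvalue is $\lambda=n$, with positive unit eigenvector $x$ given by $x_u=2^{-n/2}$ for every $u\in V(Q_n)$. Every vertex has $d_u=n$, so
\begin{eqnarray*}
\max_{u\in V(Q_n)}\frac{d_u}{x_u^2}=n\,2^{n}.
\end{eqnarray*}
Substituting into Corollary \ref{cor:openpacking} gives
\begin{eqnarray*}
\rho^o(Q_n)\leq\lambda^{-2}\,n\,2^n=\frac{n\,2^n}{n^2}=\frac{2^n}{n},
\end{eqnarray*}
which is the claimed inequality.

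As a sanity check, and as a fallback that avoids any spectral input, there is a direct counting argument. If $S$ is an open packing of $Q_n$, then the neighbourhoods $N(u)$ for $u\in S$ are pairwise disjoint, and each has size $n$. Therefore $n|S|\leq 2^n$.

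No step here is a genuine obstacle. The only point needing care is confirming the hypotheses of Theorem \ref{thm:evopenpackingnumberbound}: $A_{Q_n}\in\mathcal{A}_{Q_n}$, and $\lambda=n>0$ has a positive eigenvector. Both are immediate from regularity.
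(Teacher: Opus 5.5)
Your proof is correct. Note that the paper gives no proof of this statement: it is quoted from \cite[Corollary 3.12]{BCG2025} only as a benchmark. The paper's own contribution is a sharper estimate obtained from Theorem~\ref{thm:rho0alpha2} and Corollary~\ref{Cor6.4}, namely $\rho^o(Q_n)=2\alpha_2(Q_{n-1})\le 2^n/(n+1)$ for odd $n$, and $\le 2^n/n$ for even $n$. So there is no internal argument to match yours against.

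Your two derivations are in fact the same argument. For any $d$-regular graph on $N$ vertices, the Perron vector of $A_G$ is constant, so Corollary~\ref{cor:openpacking} collapses to $\rho^o(G)\le N/d$. That is exactly your disjoint-neighbourhood count. The Cauchy--Schwarz step in Theorem~\ref{thm:evopenpackingnumberbound} is then tight precisely when every vertex has the same number of neighbours in the packing. For an open packing this means the neighbourhoods of the packing partition $V(G)$. By Lemma~\ref{lem5.2} together with divisibility of $2^n$ by $n$, this happens for $Q_n$ exactly when $n$ is a power of $2$.

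The counting argument is therefore the more economical proof, and it needs nothing beyond regularity. The spectral version has one merit: it shows that the paper's Theorem~\ref{thm:evopenpackingnumberbound} already recovers the bound of \cite{BCG2025}. To beat it, the paper must switch tools, using the $\alpha_2$ bound of Lemma~\ref{alpha2} with the quadratic $p(x)=x^2+\lambda x$ in Proposition~\ref{propo:openpackingbipartiteprisms}. Both of your arguments tacitly assume $n\ge 1$.
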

By Theorem \ref{thm:rho0alpha2}, we know that $\rho^o(G\Box K_2)=2\alpha_2(G)$, and thus we can conclude that Corollary \ref{Cor6.4} is better than Corollary \ref{coro:Brimkovetal} (\cite[Corollary 3.12]{BCG2025}).

\subsection*{Acknowledgements}
Aida Abiad is supported by NWO (Dutch Research Council) through the grant VI.Vidi.213.085.


\newpage
\section*{Appendix}


\begin{table}[htp!]
\centering
  \caption{For all connected non-isomorphic graphs of small fixed order $n$, we show the percentage of graphs of fix order for which our bound from Corollary \ref{cor3.3} strictly improves the known bound from Theorem \ref{Bresarbound} (\cite[Theorem 6]{02}).}\label{graph6 5vertices}
\begin{tabular}{c|r}
\hline
$n$ & Percentage of strict improvement \\
\hline
5 & $\nicefrac{7}{21}\approx 33\%$\\
6 & $\nicefrac{56}{112}\approx 50\%$\\
7 & $\nicefrac{300}{853}\approx 35\%$\\
8 & $\nicefrac{5381}{11117}\approx 48\%$\\
9 & $\nicefrac{84677}{261080}\approx 32\%$\\

\hline
\end{tabular}
\end{table}


\begin{table}[htp!]
  \caption{Comparison of Corollary \ref{coro:injectiveboundregular}, Theorem \ref{Bresarbound} (\cite[Theorem 6]{02}), and the trivial bound $\Delta(G)$ for some well-known $d$-regular graphs.}\label{table:knowngraphs}
    \centering
  {\small{
  \begin{tabular}{l|c|c|c} 
  \hline
  Graph & Corollary \ref{coro:injectiveboundregular}& Theorem \ref{Bresarbound} & degree\\
  \hline
  Bidiakis cube& 3 & 3 & 3\\
  Blanusa First Snark Graph& 4 & 4& 3\\
  Blanusa Second Snark Graph& 4& 4& 3\\
  Brinkmann graph& 5& 6& 4\\
  Cube graph $Q_3$ & 4 &  4&3\\
  Clebsch graph& 6 & 7 & 5\\
  Coxeter Graph& 4& 4& 3\\
  Cycle $C_{4k+r}$ ($0\leq r\leq2$)& $\frac{4-(2\cos\frac{2\pi k}{4k+r})^2}{2-(2\cos\frac{2\pi k}{4k+r})^2}$ & $-$ & 2\\
   Desargues graph& 4 & 4& 3\\
   Durer graph& 3 & 4 & 3\\
   Dyck graph& 4& 4 & 3\\
    F26A Graph& 4& 4& 3\\
    Flower Snark J5& 4& 4 & 3\\
  Folkman Graph& 4& 5 & 4\\
  Franklin graph& 4& 4& 3\\
  Frucht graph& 3 & 4 & 3\\
  \textbf{Heawood graph}& \textbf{7} & 5 &3\\
   Hoffman Graph& 4 & 5 & 4\\
  Holt graph& 5& 5& 4\\
   Icosahedral graph & 6 & 6 & 5\\
   Klein 7-regular Graph& 8& 8& 7\\
    $K_n$ & $n$ & $\frac{\sqrt{4n^2-8n+1}+1}{2}$ & $n-1$\\
   M$\ddot{o}$bius-Kantor graph& 4 & 4 &3\\
   Markstroem Graph& 3& 4& 3\\
   Nauru Graph& 3 & 3 & 3\\
   Octahedral graph& $4$ & 5 & 4\\
   Petersen graph & 4 & 4 & 3\\
   Robertson graph&  5& 5& 4\\
    \textbf{Shrikhande graph}& \textbf{16} & 10 &6\\
   Triangular prism & 3 & 3 & 3\\
   Tietze graph& 4 & 4 & 3\\
   Tutte-Coxeter graph& 4& 4& 3\\
   Wells graph& 6& 7 & 5\\ 
  \hline
  \end{tabular}
  }}
\end{table}

\newpage

\begingroup
\renewcommand{\arraystretch}{1.0}
\setlength{\tabcolsep}{1.8pt}
\fontsize{11pt}{15pt}\selectfont
\begin{longtable}{l|c|c} 
\caption{Comparison of Theorem \ref{ICIbound} and Theorem \ref{Cardosobound} (\cite[Proposition 3.1]{Cardosofilomat}) for some Sagenamed graph and some graph classes. The bounds of Theorem \ref{ICIbound} better than Theorem \ref{Cardosobound} are in bold.}\label{table:injectivechromaticindex}\\
\hline
Graph & Theorem \ref{ICIbound}& Theorem \ref{Cardosobound}\\
\hline
\endfirsthead
\caption[]{Comparison of Theorem \ref{ICIbound} and Theorem \ref{Cardosobound} (\cite[Proposition 3.1]{Cardosofilomat})} \\
\hline
Graph & Theorem \ref{ICIbound}& Theorem \ref{Cardosobound}\\
\hline
\endhead
\hline \multicolumn{3}{r}{Continued on next page} \\
\endfoot
\hline
\endlastfoot
\textbf{Bidiakis Cube}&  4& 1\\     
\textbf{Brouwer-Haemers Graph}& 39& 3\\   
\textbf{Clebsch Graph}&  7& 1 \\    
\textbf{Dodecahedral Graph}&  4 & 1 \\    
\textbf{Frucht Graph}& 4& 3\\     
\textbf{Gosset Graph}&  135&  21\\    
\textbf{Harborth Graph}&  6& 3\\   
\textbf{Heawood Graph} & 3& 1\\    
\textbf{Icosahedral Graph}& 9& 3\\      
\textbf{Klein7-Regular Graph}& 13&  3\\
Krackhardt Kite Graph& 6& 6\\     
\textbf{Meredith Graph}& 5&  1\\     
\textbf{Moser Spindle}& 4& 3\\     
\textbf{Petersen Graph}& 4& 1\\     
\textbf{Shrikhande Graph}& 12& 3\\     
\textbf{Sylvester Graph}&   7&  1\\
\textbf{Tietze Graph}&  4&  3\\     
\textbf{Wells Graph}&   7&  1\\     
\hline
Graph classes& Theorem \ref{ICIbound}& Theorem \ref{Cardosobound}\\
\hline
Book graph $B_n$&$\frac{(2n+1)(n+2\sqrt{n}+5)}{(n+2)(n+2\sqrt{n}+1)}$& 3\\
Complete graph $K_n$& $\frac{n(n-1)}{2}$&$\frac{n(n-1)}{2}$\\
\textbf{Complete bipartite graph $K_{m,n}$}& $\frac{\text{min}\{m,n\}^2+mn}{m+n}$& 1\\
Friendship graph $F_n$& $\frac{3n+6}{2n+1}$&3\\
Star graph $K_{1,n}$& 1& 1\\
\hline
\end{longtable}
\endgroup


\newpage
\begingroup
\renewcommand{\arraystretch}{0.65}
\setlength{\tabcolsep}{1.8pt}
\fontsize{11pt}{15pt}\selectfont
\begin{longtable}{l|c|c|c|c|c}
\caption{Comparison of Conjecture \ref{conjecture} (\cite[Section1]{FSGT1989}), Theorem \ref{SCIbound1},Theorem \ref{SCIbound2}and Corollary \ref{AR2026} (\cite[Corollary 9]{AR2026}) for Sage named $d$-regular graphs. Tight bounds and improvements are indicated in bold.}
\label{table:strongchromaticindex-dregular} \\
\hline
Graph & Conjecture \ref{conjecture} & Theorem \ref{SCIbound1}&Theorem \ref{SCIbound2}& Corollary \ref{AR2026} & $\chi_s'(G)$ \\
\hline
\endfirsthead
\caption[]{Comparison of Conjecture \ref{conjecture}, Theorem \ref{SCIbound1},Theorem \ref{SCIbound2} and Corollary \ref{AR2026} (\cite[Corollary 9]{AR2026}) (continued).} \\
\hline
Graph & Conjecture \ref{conjecture} & Theorem \ref{SCIbound1}& Theorem \ref{SCIbound2} & Corollary \ref{AR2026} & $\chi_s'(G)$ \\
\hline
\endhead
\hline \multicolumn{5}{r}{Continued on next page} \\
\endfoot
\hline
\endlastfoot
Balaban 10-cage& 10& 5& 5&5& 6\\
Balaban 11-cage&10 & 5& 5& 6& 6\\
Bidiakis cube& 10& 5& 5& 6& 8\\
\textbf{Biggs-Smith graph}& 10& \textbf{6}& 5& 6& 6\\
Blanusa First Snark Graph& 10& 4& 5&6& 7\\
Blanusa Second Snark Graph& 10& 4& 5& 6& 7\\
Brinkmann graph& 20& 5& 8& 9& 10\\
Brouwer-Haemers& 500& 41& 68& 68& -\\
\textbf{Bucky Ball}& 10& 4& \textbf{5}&5& 5\\
Cell 600& 180& 24& 41& 43& -\\
Chvatal graph& 20& 6& 7&8& 12\\
\textbf{Clebsch graph}& 29& 8& \textbf{10}&10& 10\\
Conway-Smith graph for 3S7& 125& 12& 28& 29& -\\
Coxeter Graph& 10& 5& 5&6& 7\\
\textbf{Desargues Graph}& 10& 3& \textbf{5}&5& 5\\
Dejter Graph& 45& 6& 11&12& -\\
\textbf{Dodecahedron}& 10&\textbf{5}& \textbf{5}&5& 5\\
Double star snark& 10& 5& 5&5& 7\\
Durer graph& 10& 4& 5&6& 6\\
Dyck graph& 10& 3& 5&6& 6\\
Ellingham-Horton 54-graph& 10& 4& 5&6& 6\\
Ellingham-Horton 78-graph& 10& 4& 5&6& 6\\
F26A Graph& 10& 6& 5&6& 7\\
Flower Snark& 10& 4& 5&5& 6\\
Folkman Graph& 20& 8& 7&8& 10\\
Foster Graph& 10& 4& 5&5& 6\\
Foster graph for 3.Sym(6) graph& 45& 7& 14&14& 15\\
Franklin graph& 10& 3& 5&6& 6\\
Frucht graph& 10& 5& 5&6& 6\\
Goldner-Harary graph& 10& 7& 5&10& 24\\
Golomb graph& 10& 5& 5&7& 11\\
Gray graph& 10& 5& 5&6& 6\\
\textbf{Grotzsch graph}& 20& 7& \textbf{8}&7& 10\\
Harborth Graph& 20& 7& 8&8& 9\\
Harries Graph& 10& 5& 5&6& -\\
Harries-Wong graph& 10& 5& 5&6& 6\\
Heawood graph& 10& 3& 5&7& 7\\
Herschel graph& 10& 5& 5&6& 9\\
Hexahedron& 10& 3& 5&6& 6\\
Hoffman Graph& 20& 7& 7&8& 12\\
Hoffman-Singleton graph& 58& 9& 18&18& -\\
Holt graph& 20& 6& 8&8& 9\\
Horton Graph& 10& 4& 5&6& 6\\
Icosahedron& 29& 8& 12&15& 15\\
Klein 3-regular Graph& 10& 5& 5&6& 7\\
Klein 7-regular Graph& 58& 10& 19&21& 21\\
\textbf{Krackhardt Kite Graph}& 20& 6& \textbf{8}&7& 14\\
Ljubljana graph& 10& 5& 5&6& 6\\
M22 Graph& 320& 30& 51&52& -\\
\textbf{Markstroem Graph}& 10& \textbf{6}& 5&6& 6\\
McGee graph& 10& 5& 5&6& 7\\
Meredith Graph& 20& 7& 7&7& 13\\
Moebius-Kantor Graph& 10& 3& 5&6& 6\\
Moser spindle& 10& 6& 6&6& 9\\
\textbf{Murty Graph}& 20& \textbf{7}& \textbf{8}&6& 10\\
Nauru Graph& 10& 4& 5&6& 6\\
Pappus Graph& 10& 4& 5& 6& 6\\
Perkel Graph& 45& 9& 14&15& -\\
\textbf{Petersen graph}& 10& 4&\textbf{5}&5& 5\\
\textbf{Poussin Graph}& 29& 10& \textbf{11}&10& 17\\
Robertson Graph& 20& 5& 8&8& 10\\
\textbf{Shrikhande graph}& 45& 7& \textbf{16}& 16& 16\\
Sims-Gewirtz Graph& 125& 14& 28&28& -\\
\textbf{Sousselier Graph}& 20& 5&\textbf{7}&6& 7\\
\textbf{Sylvester Graph}& 29& 6& \textbf{10}&10& 10\\
\textbf{Szekeres Snark Graph}& 10& 4& \textbf{5}&5& 5\\
Tietze Graph& 10& 3& 5&6& 7\\
Tricorn Graph& 10& 4& 5&5& 7\\
Truncated Tetrahedron& 10& 5& 5&6& 6\\
Tutte 12-Cage& 10& 4& 5&6& 6\\
Tutte-Coxeter graph& 10 & 5& 5&5& 7\\
Twinplex Graph& 10& 4& 5&6& 7\\
Wagner Graph& 10& 4& 5&6& 10\\
\textbf{Wells graph}& 29& 7& \textbf{10}&10& 10\\
\textbf{Wiener-Araya Graph}& 20& 5& \textbf{8}&5& 8\\
\end{longtable}
\endgroup


\end{document}